\newtheorem{theorem}{Theorem}
\newtheorem{corollary}[theorem]{Corollary}
\newtheorem{lemma}[theorem]{Lemma}
\newtheorem{proposition}[theorem]{Proposition}
\newtheorem{claim}[theorem]{Claim}
\theoremstyle{definition}
\newtheorem{defin}[theorem]{Definition}
\newcommand{\bal}{\mathsf{b}}
\newcommand{\vd}{\alpha_{\textnormal{vert}}}
\titleformat{\section}[hang]{\scshape\large\bfseries\filcenter}{\S\thesection}{4pt}{}
\titleformat{\subsection}[hang]{\scshape\bfseries}{\thesubsection.}{4pt}{}
\newcommand{\tss}[1]{\textsuperscript{#1}}
\newcommand{\on}[1]{
	\operatorname{#1}
}
\newcommand{\tdt}{\times\cdots\times}
\newcommand{\tightoverset}[2]{
  \mathop{#2}\limits^{\vbox to -.5ex{\kern-1.15ex\hbox{$#1$}\vss}}}
\newcommand\restr[2]{{
  \left.\kern-\nulldelimiterspace 
  #1 
  \vphantom{\big|} 
  \right|_{#2} 
}}
\newcommand{\subalign}[1]{%
  \vcenter{%
    \Let@ \restore@math@cr \default@tag
    \baselineskip\fontdimen10 \scriptfont\tw@
    \advance\baselineskip\fontdimen12 \scriptfont\tw@
    \lineskip\thr@@\fontdimen8 \scriptfont\thr@@
    \lineskiplimit\lineskip
    \ialign{\hfil$\m@th\scriptstyle##$&$\m@th\scriptstyle{}##$\hfil\crcr
      #1\crcr
    }%
  }%
}
\newcommand\blfootnote[1]{%
  \begingroup
  \renewcommand\thefootnote{}\footnote{#1}%
  \addtocounter{footnote}{-1}%
  \endgroup
}
\newcommand\ssk[1]{
	\substack{#1}
}
\newcommand\ex{\mathop{\mathbb{E}}}
\newcommand{\exx}{
  \mathop{
    \mathchoice{\vcenter{\hbox{\larger[4]$\mathbb{E}$}}}
               {\kern0pt\mathbb{E}}
               {\kern0pt\mathbb{E}}
               {\kern0pt\mathbb{E}}
  }\displaylimits
}
\newcommand*\bcdot{\mathpalette\bigcdot@{0.5}}
\newcommand*\bigcdot@[2]{\mathbin{\vcenter{\hbox{\scalebox{#2}{$\m@th#1\bullet$}}}}}
\def\blfootnote{\gdef\@thefnmark{}\@footnotetext}
\newcommand\id{\mathbbm{1}}
\begin{document}
\begin{center}\Large\noindent{\bfseries{\scshape Good bounds for sets lacking skew corners}}\\[24pt]\normalsize\noindent{\scshape Luka Mili\'cevi\'c\dag}\\[6pt]
\end{center}
\blfootnote{\noindent\dag\ Mathematical Institute of the Serbian Academy of Sciences and Arts\\\phantom{\dag\ }Email: luka.milicevic@turing.mi.sanu.ac.rs}

\footnotesize
\begin{changemargin}{1in}{1in}
\centerline{\sc{\textbf{Abstract}}}
\phantom{a}\hspace{12pt}~A skew corner is a triple of points in $\mathbb{Z} \times \mathbb{Z}$ of the form $(x,y), (x, y + a)$ and $(x + a, y')$. Pratt posed the following question: how large can a set $A \subseteq [n] \times [n]$ be, provided it contains no non-trivial skew corner (i.e. one for which $a\not=0$)? We prove that $|A| \leq \exp(- c\log^c n) n^2$, for an absolute constant $c > 0$, which, along with a construction of Beker, essentially resolves Pratt's question.\\ 
\phantom{a}\hspace{12pt}~Our argument is represents a two-dimensional variant of the method of Kelley and Meka, which they used to prove Behrend-type bounds in Roth's theorem. A very similar result was obtained independently and simultaneously by Jaber, Lovett and Ostuni.
\end{changemargin}
\normalsize

\section{Introduction}

\hspace{12pt} In~\cite{Pratt}, Pratt defined \textit{skew corner} to be a configuration of the form $(x, y), (x, y + a),(x+ a, y')$, with $x, y, y', a\in \mathbb{Z}$. If $a \not= 0$, we say that the skew corner is \textit{non-trivial}. In that paper, as a part of study of certain group-theoretic approaches to the problem of obtaining an optimal matrix multiplication algorithm originating in~\cite{CKSUmatrix, CUmatrix}, Pratt posed the following problem: how large can a subset $A \subseteq [n] \times [n]$ be, provided it contains no non-trivial skew corners? Let us write $s(n)$ for this quantity. Additionally, he conjectured that in fact $s(n) = O_\varepsilon(n^{1 + \varepsilon})$ for every $\varepsilon > 0$. In particular, if it had held, that conjecture would have implied that the approaches above cannot give an optimal matrix multiplication algorithm.\\
\indent Notice that, if one sets $y' = y$, then a skew corner becomes a corner in the usual sense in the context of additive combinatorics. Finding bounds on the size of sets that lack non-trivial corners is a well-studied problem, with Ajtai and Szemer\'edi~\cite{AjtaiSzem} obtaining first bounds of the form $o(n^2)$. The best known bounds in the latter problem are due to Shkredov~\cite{Shkredov}, immediately giving $s(n) \leq O(n^2 / (\log \log n)^{\Omega(1)})$. When it comes to lower bounds, Petrov~\cite{Petrov} showed that $s(n) \geq \Omega(n \log n/\sqrt{\log \log n})$. In~\cite{PohoataZakharov}, Pohoata and Zakharov showed $s(n) \geq \Omega(n^{5/4})$, thus refuting Pratt's conjecture. They even conjectured that $s(n) \leq O(n^{2-c})$ for an absolute constant $c > 0$. Very recently, Beker~\cite{Beker} disproved their conjecture by obtaining a Behrend-type lower bound and also improved the upper bounds, namely
\begin{equation}\label{BekerBound}n^2 \exp(-O(\sqrt{n})) \leq s(n) \leq O(n^2\log^{-\Omega(1)} n).\end{equation}
\indent Finally, Beker asked whether Behrend-shape upper bounds hold for the skew corners-free sets. Our main result is that this is indeed the case, which essentially answers the initial question of Pratt, at least when it comes to the shape of the bounds. 

\begin{theorem}\label{mainthm} There exists an  absolute constant $c > 0$ with the following property. Let $A \subseteq [n] \times [n]$ be a skew corner-free set, i.e. a set which contains no configuration of the form $(x, y), (x, y + a),(x+ a, y')$, with $x, y, y', a\in \mathbb{Z}$ and $a \not =0$. Then 
\[|A| \leq \exp(- c\log^c n) n^2.\]
\end{theorem}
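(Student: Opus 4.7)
The plan is to run a Kelley--Meka-style density increment tailored to the skew-corner count. Embed $A\subseteq[n]\times[n]$ into $G^2=(\mathbb{Z}/N\mathbb{Z})^2$ with $N=2n$ (so cyclic wrap-around introduces no spurious skew corners), set $\alpha=|A|/N^2$, $f=1_A-\alpha$, $A_x=\{y:(x,y)\in A\}$, and $\alpha_x=|A_x|/N$. The total number of (trivial plus non-trivial) skew corners equals
\[
\sum_{x,a}|A_x\cap(A_x-a)|\cdot|A_{x+a}| \;=\; \sum_x|A_x|^2 \;\leq\; \alpha N^3,
\]
the last inequality because $|A_x|\leq N$. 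Normalizing and expanding $1_A=\alpha+f$, most cross terms vanish by mean zero and separation of variables, leaving
\[
\alpha^3+\alpha\cdot\on{Var}_x(\alpha_x)+\ex_{x,y,a,y'}f(x,y)f(x,y+a)f(x+a,y') \;\leq\; \alpha/N.
\]
Provided $\alpha\gg N^{-1/2}$, one therefore has either $\on{Var}_x(\alpha_x)\geq c\alpha^2$, which by a standard variance argument produces a column of density $\geq(1+c')\alpha$ (a direct density increment), or the triple-convolution quantity $|\ex_{x,y,a,y'} f(x,y)f(x,y+a)f(x+a,y')|\geq c\alpha^3$.

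In the latter case I would rewrite the triple product as $\ex_{x,a}h_x(a)F(x+a)$, with $h_x(a)=\ex_y f(x,y)f(x,y+a)$ and $F(z)=\alpha_z-\alpha$, and apply Cauchy--Schwarz in $(x,a)$ to obtain
\[
\alpha^6 \;\lesssim\; \bigl(\ex_x\|f(x,\cdot)\|_{U^2(G)}^4\bigr)\cdot\on{Var}_x(\alpha_x).
\]
Combined with the dichotomy above, this forces $\ex_x\|f(x,\cdot)\|_{U^2(G)}^4\geq c\alpha^4$: on average, the columns of $A$ carry significant $U^2$ mass, equivalently, a large $L^4$ Fourier spectrum.

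From here the argument parallels the Kelley--Meka spectral boosting, lifted to two dimensions. Largeness of $\|f(x,\cdot)\|_{U^2(G)}^4$ gives, via Chang's lemma, column spectra $\Lambda_x\subseteq\widehat G$ of polylogarithmic dimension; using KM's almost-periodicity / dependent-random-choice step, one seeks a common Bohr set $B_2\subseteq G$ aligned with the $\Lambda_x$ across a positive-density set of $x$, then iterates the same procedure in the $x$-coordinate to produce a 2D Bohr product $B_1\times B_2$ of dimension $\log^{O(1)}(1/\alpha)$ and radius $\alpha^{O(1)}$ on a translate of which $A$ has density at least $\alpha(1+c)$. After $O(\log(1/\alpha))$ iterations the density becomes $\Omega(1)$, while the ambient product shrinks by at most $\exp(\log^{O(1)}(1/\alpha))$; matching this with the available space $n$ forces $\log(1/\alpha)\leq\log^{O(1)}n$, i.e.\ $|A|\leq\exp(-c\log^c n)\,n^2$.

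The main obstacle is this coordination step: a priori, the large-spectrum sets $\Lambda_x$ for different $x$ can sit at unrelated frequencies, so no single Bohr set in the $y$-coordinate is guaranteed to work across all $x$ simultaneously. The most promising resolution is to re-derive the Cauchy--Schwarz inequality above with an additional application that mixes the $x$ and $a$ variables symmetrically, upgrading the one-sided column-wise $U^2$ bound to a genuine two-dimensional Gowers-type estimate on $f\colon G^2\to\mathbb{R}$, from which a product Bohr structure should emerge directly. Once this 2D inequality is secured, the remaining KM machinery---spectral boosting inside a Bohr set, Bogolyubov-type arguments, and the quasipolynomial iteration accounting---should transfer essentially unchanged.
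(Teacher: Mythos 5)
Your set-up --- the passage to a cyclic group, the identity $\sum_{x,a}|A_x\cap(A_x-a)|\,|A_{x+a}|=\sum_x|A_x|^2$ for a skew-corner-free set, the expansion $1_A=\alpha+f$ and the factorisation of the triple form as $\ex_{x,a}h_x(a)F(x+a)$ --- is sound and matches the entry into the paper's Step~2 of the proof of Proposition~\ref{densincprop}. The fatal flaw is the Cauchy--Schwarz in $(x,a)$: it lands you on $\ex_x\|f(x,\cdot)\|_{\mathsf U^2}^4\gtrsim\alpha^4$, which is plain Fourier-analytic ($\mathsf U^2$) control of the columns, and that is not a hypothesis the Kelley--Meka machinery can act on. Spectral positivity/unbalancing, sifting via dependent random choice, and the almost-periodicity step all require as input a high-exponent norm (the $\mathsf U^{2,r}_{\mathrm{KM}}$ norm, or here the vertical-segments norm $\mathsf{VS}_r$) with $r\sim\log(1/\delta)$; the one non-negotiable ingredient of Kelley--Meka is to replace that first Cauchy--Schwarz with H\"older's inequality of exponent $r$ --- in the paper this is the step leading to inequality~\eqref{holderstep1}, with $r$ as in~\eqref{rdefn} --- so that the resulting norm has enough ``width'' for the unbalancing (via Lemma~\ref{binomialIneq}) and sifting arguments to bite. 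Starting from a $\mathsf U^2$ bound you can at best reproduce Beker's $n^2/\log^{\Omega(1)}n$ density increment, not a Behrend-type one; and the obstacle you yourself flag (the column spectra $\Lambda_x$ wandering) is exactly the phenomenon, already present in the classical approach, that the $\mathsf{VS}_r$ norm and the sifting-plus-almost-periodicity argument are designed to defeat. It will not go away by ``symmetrising'' the Cauchy--Schwarz; nor does $\ex_x\|f(x,\cdot)\|_{\mathsf U^2}^4\gtrsim\alpha^4$ yield polylogarithmic-dimensional large spectra $\Lambda_x$ via Chang's lemma --- that inference is not correct as stated.

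There is also a structural mismatch in the iteration you sketch. The increment one actually extracts is not on a Bohr product $B_1\times B_2$ but on the asymmetric hierarchy $A\subseteq X\times(t+B)\subseteq(s+B)\times(t+B)$: one-dimensional almost-periodicity (Theorem~\ref{apthm}) produces a genuine Bohr set $B'$ in the $y$-direction, whereas the $x$-direction structure is only a generic ``regular'' column set $X$ (roughly uniform column densities, no density jumps on sub-Bohr sets), arranged by a preliminary regularisation step; there is no spectral mechanism to upgrade $X$ to a Bohr set. Consequently the iteration must carry $X$ along as an extra parameter $\delta=|X|/|B|$, tracked separately from $\alpha$ and from the Bohr data $(d,\rho)$, and balancing these three is what makes the increment close with Behrend-type losses. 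Your plan of iterating in the $x$-coordinate ``in the same way'' to produce a Bohr product would either lose the density-increment information or fail to control the ambient set size.
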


\noindent\textbf{Remark.} A very similar result was obtained independently and simultaneously by Jaber, Lovett and Ostuni~\cite{JLOskew}.\\

Our approach is closely related to the breakthrough work of Kelley and Meka~\cite{KelleyMeka}, where they proved Behrend-shape bounds in Roth's theorem. However, given that the problem considered in this paper is two-dimensional rather than one-dimensional leads to important differences. Let us also mention that Peluse posed the problem of improving the bounds on the corner-free sets using the methods of Kelley and Meka (see Problem 1.18 in~\cite{PeluseSurvey} and the surrounding discussion). Hence, the work in this paper can be seen as progress towards that goal.

\subsection{Proof overview}

 We include a very brief summary of Kelley and Meka's proof in this subsection (see also an exposition by Bloom and Sisask~\cite{BloomSisask}), but before we embark on that discussion, let us explore the traditional additive-combinatorial approaches to questions of finding arithmetic configurations.\\

\indent For the following discussion, we use the usual notion of \textit{discrete multiplicative derivative} $\partial_a$, which, for a given function $f \colon G \to \mathbb{C}$ gives another one defined as $\partial_a f(x) = f(x + a) \overline{f(x)}$. A fundamental tool in the context of counting arithmetic configurations are uniformity norms, introduced by Gowers in~\cite{Gow1,Gow2}.  For example, if $f_1, f_2, f_3 \colon G \to \mathbb{D}$ are three functions, in the case of arithmetic progressions of length 3, the uniformity norm $\mathsf{U}^2$, defined via formula $\|f\|_{\mathsf{U}^2}^4 = (\ex_{a,b,x} \partial_a \partial_b f(x))^{1/4}$, controls the corresponding counting multilinear form via inequality $|\ex_{x, a} f_1(x) f_2(x + a) f_3(x + 2a)| \leq \|f_3\|_{\mathsf{U}^2}.$ The proof proceeds via Cauchy-Schwarz inequality (abbreviated to CS below)
\begin{align}
    \Big|\exx_{x, a} f_1(x) &f_2(x + a) f_3(x + 2a)\Big|^4 = \Big|\exx_{x} f_1(x) \Big(\exx_a f_2(x + a) f_3(x + 2a)\Big)\Big|^4\nonumber\\
    \leq &\Big|\exx_{x} |f_1(x)|^2 \Big|^2 \Big|\exx_x\Big|\exx_a f_2(x + a) f_3(x + 2a)\Big|^2\Big|^2 \hspace{2cm}\text{(using CS)}\nonumber\\
    \leq &\Big|\exx_x\Big|\exx_a f_2(x + a) f_3(x + 2a)\Big|^2\Big|^2\label{elimStep1}\\
    = &\Big|\exx_{x, a, a'}f_2(x + a) f_3(x + 2a) \overline{f_2(x + a') f_3(x + 2a')}\Big|^2 \hspace{2cm}\text{(after expanding)}\nonumber\\
    = &\Big|\exx_{x, a, y} \partial_a f_2(x) \partial_{2a} f_3(x + y)\Big|^2\hspace{2cm}\text{(change of variables)}\nonumber\\
    = & \Big|\exx_{x, a} \partial_a f_2(x) \Big(\exx_y \partial_{2a} f_3(x + y)\Big)\Big|^2\nonumber\\
     \leq &\Big(\exx_{x, a} |\partial_a f_2(x)|^2 \Big) \Big(\exx_{x, a} \Big|\exx_y \partial_{2a} f_3(x + y)\Big|^2\Big)  \hspace{2cm}\text{(using CS)}\nonumber\\
     \leq &\exx_{x, a} \Big|\exx_y \partial_{2a} f_3(x + y)\Big|^2\label{elimStep2}\\
     = &\exx_{x, a, b}  \partial_{2a, b} f_3(x).\nonumber\end{align}

Observe that each Cauchy-Schwarz step gives rise to another application of discrete multiplicative derivative. The derivative is applied in the direction corresponding to the difference between corresponding points in the arguments of the functions. More precisely, notice that in each Cauchy-Schwarz step we eliminate a term, say $f(x)$, e.g. in~\eqref{elimStep1} we eliminated term involving $f_1$ and in~\eqref{elimStep2} we eliminated term involving $f_2$. Then every remaining term $g(y)$ receives additional derivative $\partial_{y - x}$ upon expanding in the next step.\\
\indent We may carry out analogous steps in the case when we count skew corners. This time the method above gives the following inequalities, depending on which variables we use in outer expectation in the applications of Cauchy-Schwarz inequality,\footnote{We are slightly more careful in the last step of the method sketched above; that step actually gives upper bound which is the product of norms of two functions, rather than a norm of a single function.}
\[\Big|\exx_{x, y, a, y'} f_1(x,y) f_2(x, y + a) f_3(x +a, y')\Big| \leq \|f_i\|_{\mathsf{U}(G \times G, \{0\} \times G))} \|f_3\|_{\mathsf{U}^2},\]
for $i \in \{1,2\}$, and
\[\Big|\exx_{x, y, a, y'} f_1(x,y) f_2(x, y + a) f_3(x +a, y')\Big| \leq  \|f_1\|_{\mathsf{U}(G \times G, \{0\} \times G))} \|f_2\|_{\mathsf{U}(G \times G, \{0\} \times G))},\]
and where $\|\cdot\|_{\mathsf{U}(H_1, H_2)}$ is the directional uniformity norm\footnote{See \cite{Austin1, Austin2, MilDir} for the definition and discussion of directional uniformity norms.} with derivative groups $H_1, H_2 \leq G \times G$, i.e.
\[\|f\|_{\mathsf{U}(H_1, H_2)} = \exx_{x \in G \times G, a_1 \in H_1, a_2 \in H_2} \partial_{a_1}\partial_{a_2}f(x).\]

In particular, having the $\mathsf{U}^2$ control is sufficient when it comes to obtaining quantitative bounds for this problem. This observation was exploited by Beker~\cite{Beker} in his proof of the upper bound~\eqref{BekerBound}, (at least in the finite vector space model of the problem; see Proposition 3.1 in that paper).\\

\indent However, in order to achieve Behrend-type bounds, according to Kelley-Meka method, it is crucial that we have a \textit{simultaneous} control over the two norms appearing in the final inequalities in the method above. In a nutshell, in the Kelley-Meka method we replace the first application of Cauchy-Schwarz inequality by H\"older's inequality and in then their method can be viewed as weak form of the inverse theory for the resulting norm. But, if we were to have two applications H\"older's inequality, then the possible inverse conjectures would not necessarily imply the desired density increment. See Example 8 and surrounding discussion in the paper of Shkredov~\cite{ShkredovKM}. So, in some sense, our work is adaptation of Kelley-Meka method for the directional norm ${\mathsf{U}(G \times G, \{0\} \times G))}$.\\

Furthermore, the problem of finding skew corners has an additional level of difficulty in comparison with Roth's theorem. Namely, in Roth's theorem, the only structured sets that play a role, stemming from the obstruction to uniformity for $\mathsf{U}^2$ norm, are precisely (coset of) subspaces of $G$. Thus, in that problem, we have the following simple hierarchy of structures, $A' \subseteq t + H \subseteq G$, where $A'$ is the intersection $A \cap (t + H)$, for the given set $A$, and $t + H$ is a coset of a subspace that arises during the density increment procedure. If $A'$ is quasirandom subset of $t+H$ in the $\mathsf{U}^2$ sense, this hierarchy completely determines the count of arithmetic progressions of length 3, and the density of $t + H$ in $G$ and the relative density of $A'$ in $t + H$ have different effects on this count.\\
\indent When it comes to finding skew corners, we need an additional kind of structure in the hierarchy, namely sets that depend only on the first coordinate. This time, the hierarchy is $A' \subseteq X \times (t + H) \subseteq (s + H) \times (t + H) \subseteq G \times G$, which has more levels than that required in the Roth's theorem.\\

Let us now return to an outline of our work and examine more closely Kelley and Meka's approach.\\
\indent Let us write $\|\cdot\|_{\mathsf{U}^{2,r}_{\textnormal{KM}}}$ for the norm arising in their work. Namely,
\[\|f\|_{\mathsf{U}^{2,r}_{\textnormal{KM}}} = \bigg(\exx_{\ssk{a_1, a_2\\b_1, \dots, b_r}} \prod_{i \in [r]} f(x + a_1 + b_i) f(x + a_2 + b_i)\bigg)^{1/2r}.\]
Note that in the case $r = 2$, this quantity becomes the usual $\mathsf{U}^2$ uniformity norm.\\
\indent Even though this paper is exclusively about a problem in cyclic groups, to simplify the notation in this introductory discussion, in the rest of this section (with the exception of Corollary~\ref{nonbinsys}), we consider the finite vector space and write $G = \mathbb{F}_p^n$. Of course, in the actual work we shall use appropriate substitutes for various notions appearing here, e.g. Bohr sets for subgroups etc.\\
\indent Let $A \subseteq G$ be a set of density $\delta$ without arithmetic progression of length 3. Let us write $\bal_A = \id_A - \delta$ for the \textit{balanced function} of $A$. In the simplest terms, the argument of Kelley and Meka consists of the following steps.
\begin{itemize}
    \item[\textbf{Step 1.}] Starting from the assumption that $A$ has no arithmetic progressions, they obtain $\|\bal_A\|_{\mathsf{U}^{2,r}_{\textnormal{KM}}} \geq \Omega(\delta)$ for a suitable $r$. Since these norms quantify the deviation of $A$ from a quasirandom set, we may say that the aim of this step is to \textbf{find imbalance in $A$}. (This step was called \textit{H\"older lifting} in~\cite{BloomSisask}.)
    \item[\textbf{Step 2.}] In the second step, it is shown that the inequality $\|\bal_A\|_{\mathsf{U}^{2,r}_{\textnormal{KM}}} \geq \Omega(\delta)$ implies $\|\bal_A\|_{\mathsf{U}^{2,r'}_{\textnormal{KM}}} \geq 1 + \Omega(\delta)$ for some $r'$ which is slightly larger than $r$. So in this step they \textbf{pass from $\bal_A$ to $\id_A$}. Kelley and Meka called his step \textit{spectral positivity} (and in~\cite{BloomSisask} it is referred to as \textit{unbalancing}).
    \item[\textbf{Step 3.}] Starting from inequality $\|\bal_A\|_{\mathsf{U}^{2,r'}_{\textnormal{KM}}} \geq 1 + \Omega(\delta)$, using dependant random choice, in the step called \textbf{sifting}, they find an additional set $A'$, of density $\alpha'$, such that
    \[\exx_{x, y} \id_S(x)  \id_{A'}(x + y) \id_{A'}(y) \geq (1 + \Omega(1)) {\alpha'}^2,\]
    where $S$ is the set of elements $x$ where the self-convolution $\id_A \ast \id_A$ is significantly larger than its mean $\delta^2$.
    \item[\textbf{Step 4.}] \textbf{Using almost-periodicity results} stemming from the works of Croot and Sisask, and Sanders, applied to the expression above, they obtain a subspace $V$ of small codimension such that
    \[\exx_{x, y \in G, v \in V} \id_S(x + v)  \id_{A'}(x + y) \id_{A'}(y) \geq (1 + \Omega(1)) {\alpha'}^2.\]
    \item[\textbf{Step 5.}] The final step is to obtain an efficient density increment using the conclusions above.
\end{itemize}

In this work, however, we are looking for skew corners rather than arithmetic progressions of length 3, giving rise to a different type of structural hierarchy, as mentioned above. This has two important ramifications. Firstly, throughout the proof, instead of norm $\|\cdot\|_{\mathsf{U}^{2,r}_{\textnormal{KM}}}$, we analyse the norm called \textit{vertical segments norm}, which is defined as 
\[\|f\|_{\mathsf{VS}_r} = \Big(\exx_{x_1, \dots, x_r,  y_1, \dots, y_r, a} \prod_{i \in [r]} f(x_i, y_i) f(x_i, y_i + a)\Big)^{1/2r},\]
for $f \colon G \times G \to \mathbb{R}$. Their name stems from the fact that the points in the arguments of $f$ in the expression above belong to $r$ vertical segments of length $a$.\\

This is a good place to mention another variant of such norms, called \textit{grid norms}, defined as
\[\|f\|_{\mathsf{gr}_{2, r}} = \Big(\exx_{x_1, x_2 \in X, y_1, \dots, y_r \in Y} \prod_{i \in [r]} f(x_1, y_i) f(x_2, y_i)\Big)^{1/2r},\]
for functions $f \colon X \times Y \to \mathbb{R}$, which were introduced by Kelley, Lovett and Meka in~\cite{KLMgrid}. Just like $\|\cdot\|_{\mathsf{U}^{2,r}_{\textnormal{KM}}}$ corresponds to the usual $\mathsf{U}^2$ norm, and vertical segments norm corresponds to the directional norm $\|\cdot\|_{\mathsf{U}(G \times G, \{0\} \times G)}$, the grid norms correspond to 2-dimensional box norm.\\

Given that we work with different norms, we need to modify the steps of Kelley and Meka's proof. In particular, we can no longer use the spectral positivity step, as that argument relied on the specific form of the norm $\|\cdot\|_{\mathsf{U}^{2,r}_{\textnormal{KM}}}$. However, in~\cite{FHHK}, where they generalize the argument of Kelley and Meka to certain systems of linear forms, Filmus, Hatami, Hosseini and Kelman encounter a similar difficulty, as they consider grid norms instead of $\|\cdot\|_{\mathsf{U}^{2,r}_{\textnormal{KM}}}$, and their argument for this step is based on a combination a more direct use of a variant of Gowers-Cauchy-Schwarz inequalities and an inequality of Kelley and Meka concerning functions with non-negative odd moments (see Lemma~\ref{binomialIneq}). We overcome this issue in a similar spirit in our paper.\\  

Furthermore, as our structural hierarchy is $A \subseteq X \times (t + H) \subseteq (s + H) \times (t + H) \subseteq G \times G$, we additionally have to perform a regularization step at the beginning of the density increment strategy, which ensures that columns of $A$ are well-behaved, as well as the set $X$ itself. Hence, our proof consists of the following steps.
\begin{itemize}
    \item[\textbf{Step 1.}] \textbf{Regularization of $A$.} In this step we pass to well-behaved columns of the set $A$.
    \item[\textbf{Step 2.}] \textbf{Finding imbalance in $A$.} We show that a suitably defined balanced function of $A$ is not quasirandom, i.e. that it has large vertical segments norm.
    \item[\textbf{Step 3.}] \textbf{Passing from $\bal$ to $\id_A$.} We conclude that the vertical segments norm of $\id_A$ is significantly higher than that of a randomly chosen subset of $X \times (t + H)$ of the same density.
    \item[\textbf{Step 4.}] \textbf{Sifting.} We use a dependent random choice argument to find sets to turn the information on the vertical segments norm of $\id_A$ into further counts of arithmetic configurations.
    \item[\textbf{Step 5.}] \textbf{Using almost-periodicity.} We use almost-periodicity results to find strong algebraic structure. Let us remark that, even though the problem at hand is two-dimensional, we may again use one-dimensional almost-periodicity results. 
    \item[\textbf{Step 6.}] \textbf{Completing the proof.} The conclusions of previous steps are put together to find a desired density increment.
\end{itemize}

Our main technical result is Proposition~\ref{densincprop}, which sums up a single density increment step.\\

We conclude this section with an additional comment on how our results differ from those of Filmus, Hatami, Hosseini and Kelman~\cite{FHHK}, apart from the obvious distinction that their main arithmetic result is about one-dimensional structures. In that work, they prove a general counting lemma for graphs (Theorem 1.11 in their paper), which in the conclusion has a density increment on a product of unstructured sets $S \times T$, which is unsuitable for our purposes, as we look for structure of the form $S \times (t + H)$ for a subgroup $H \leq G$.\\
\indent On the other hand, their main result on arithmetic configurations concerns (special instances of) affine \textit{binary systems of linear forms}. These are configurations parameterized by variables $x_1, \dots, x_k$, where each point is $\lambda_{i j_1} x_{j_1} + \lambda_{i j_2} x_{j_2}$, $\lambda_{i j_1}, \lambda_{i j_2} \not= 0$ and $j_1 \not= j_2$, with the crucial requirement that pairs $\{j_1, j_2\}$ are distinct for every $i$. For example, arithmetic progression of length 3 can be parametrized as $\Big(x-y, \frac{x - z}{2}, y-z\Big)$, which is in the described family. We now give a corollary of Theorem~\ref{mainthm} in the 1-dimensional setting, which cannot be realized as a binary system of linear forms.

\begin{corollary}\label{nonbinsys}
    There exists an absolute constant $c' > 0$ with the following property. Let $B \subseteq [n]$ be a set that has no non-trivial configurations of the form
    \[\Big(x, x+y, x +2y, x+ y + a, x + 2y + 2a, x + a\Big)\]
    for some $x, y, a \in \mathbb{Z}$. Then 
    \[|B| \leq \exp(- c'\log^{c'} n) n.\]
\end{corollary}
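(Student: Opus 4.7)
The plan is to derive Corollary~\ref{nonbinsys} from Theorem~\ref{mainthm} by encoding the six-point one-dimensional configuration as a skew corner in a suitable two-dimensional auxiliary set. Concretely, given $B \subseteq [n]$, I would set
\[A = \{(u, v) \in [n] \times [-n, n] : v \neq 0 \text{ and } u, u+v, u+2v \in B\}.\]
After a harmless shift, $A$ can be viewed as a subset of $[N] \times [N]$ with $N = O(n)$, and $|A|$ equals the number $T_3^*(B)$ of non-trivial three-term arithmetic progressions inside $B$. The key structural observation is that if $(s, t), (s, t+a), (s+a, t')$ forms a non-trivial skew corner in $A$ (so $a \neq 0$), then membership in $A$ yields $s, s+t, s+2t \in B$, $s, s+t+a, s+2t+2a \in B$, and $s+a \in B$, together with $t \neq 0$ by construction. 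The first six of these points are precisely those of the forbidden configuration $(x, x+y, x+2y, x+y+a, x+2y+2a, x+a)$ with $x = s$ and $y = t$, and both $y$ and $a$ are non-zero, so the configuration is non-trivial. Hence the hypothesis on $B$ forces $A$ to be free of non-trivial skew corners, and Theorem~\ref{mainthm} applied to $A \subseteq [N] \times [N]$ gives
\[|A| \leq n^2 \exp(-c_1 \log^{c_1} n).\]

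To close the argument one needs a matching lower bound on $|A| = T_3^*(B)$ in terms of the density $\beta = |B|/n$. Writing $\eta = \log(1/\beta)$, I would run a Varnavides-style averaging at scale $L = \exp(C \eta^{1/c_0})$, where $c_0 > 0$ is the Kelley--Meka exponent for the three-term arithmetic progression problem: this length is chosen so that the Kelley--Meka theorem guarantees a non-trivial three-term progression inside any subset of a length-$L$ sub-progression of $[n]$ having relative density at least $\beta/2$. A routine averaging shows that a constant fraction of the length-$L$ sub-arithmetic progressions of $[n]$ inherit density $\geq \beta/2$ from $B$, each contributing at least one non-trivial three-term progression in $B$; adjusting for the multiplicity with which a single progression lies in different sub-progressions gives
\[T_3^*(B) \geq n^2 \exp(-O(\eta^{1/c_0})).\]
Combining the two bounds and rearranging yields $\eta^{1/c_0} \gtrsim \log^{c_1} n$, which is equivalent to $|B| \leq n \exp(-c' \log^{c'} n)$ for a suitable absolute constant $c' > 0$.

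The main obstacle in carrying out this plan is the Varnavides step: transforming the qualitative Kelley--Meka existence statement into the above quantitative lower bound on $T_3^*(B)$ with a polynomial-in-$\log(1/\beta)$ dependence in the exponent. This is a standard but somewhat delicate optimization that takes the quantitative Kelley--Meka bound as an external input; once it is in place, the reduction to Theorem~\ref{mainthm} and the resulting inequality manipulations are entirely routine.
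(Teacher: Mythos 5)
Your proposal matches the paper's own proof sketch essentially step for step: you define the auxiliary two-dimensional set $A$ of parameters $(u,v)$ giving a three-term progression in $B$, observe that a non-trivial skew corner in $A$ unpacks to the forbidden six-point configuration in $B$, lower-bound $|A|$ via a Varnavides-style averaging over short sub-progressions together with the Kelley--Meka bound for Roth's theorem, and then apply Theorem~\ref{mainthm} to conclude. The only cosmetic differences are that you explicitly exclude $v=0$ and spell out the Varnavides step that the paper compresses into one sentence; neither changes the substance of the argument.
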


\begin{proof}[Proof sketch] Write $\beta = |B| /n$. Let $A \subseteq [n] \times [n]$ consist of all $(x,y)$ such that $x, x + y, x + 2y \in B$. By Kelley and Meka's proof of Roth's theorem, we see that $A$ has density $\exp(- O(\log^{O(1)} \beta^{-1}))$ inside $[n] \times [n]$. It follows from Theorem~\ref{mainthm} that, unless $\beta$ is smaller than claimed bound, $A$ has a non-trivial skew corner, which can be reinterpreted as the configuration in the statement.\end{proof}

Let us briefly indicate why this configuration cannot be realized as a binary system of linear forms. Suppose that we did have such a parametrization using additional variables $u_1, \dots, u_k$. Let us write $x(u), y(u), a(u)$ for resulting elements $x, y, a$. Observe equalities
\[x+ y + a = (x + y) + (x + a) - x\]
and
\[x + 2y + 2a = 2(x + y) + 2(x + a) - 3x.\]
However, in a binary system, such equalities completely determine the two variables $u_i$ which are used in the linear form on the left hand side, implying that forms $(x+ y + a)(u)$ and $(x + 2y + 2a)(u)$ have the same pair of parametrizing variables.\\
\indent The configuration in the corollary can be see as a union of two arithmetic progressions of length 3, starting at the same base point $x$, with steps $a$ and $b$, with an additional point $x + a - b$. Without the last point, it is easy to obtain such a 5-point configuration from Roth's theorem by Cauchy-Schwarz inequality (in fact, such a system is an affine binary systems of linear forms). However, the introduction of the last point adds another non-trivial arithmetic restriction and seems to make the problem more difficult.\\

\noindent\textbf{Acknowledgements.} This research was supported by the Ministry of Science, Technological Development and Innovation of the Republic of Serbia through the Mathematical Institute of the Serbian Academy of Sciences and Arts, and by the Science Fund of the Republic of Serbia, Grant No.\ 11143, \textit{Approximate Algebraic Structures of Higher Order: Theory, Quantitative Aspects and Applications} - A-PLUS. 

\section{Preliminaries}

In this preliminary section, we gather a few general technical results that we will exploit later in the paper. We use the standard averaging notation $\ex_{x \in X}$, which is the shorthand for $\frac{1}{|X|}\sum_{x \in X}$, and we also write $\mathbb{D} = \{z \in \mathbb{C} \colon |z|  =1 \}$ for the unit disk.

\subsection{Bohr sets}

As it is well-known, due to lack of subgroups in cyclic groups $\mathbb{Z}/N\mathbb{Z}$ we are forced to work with approximate substitutes of subgroups. With this in mind, as it is standard in additive combinatorics, we use the machinery of (regular) Bohr sets, introduced by Bourgain in~\cite{Bourgain}. In the precise statements of defnitions below, we follow the work of Schoen and Sisask~\cite{SchSisask}, in which they prove Behrend-type bounds for sets lacking solutions to 4 variable equations, as we need their variants of almost-periodicity results.

\begin{defin}Let $\Gamma \subseteq \mathbb{Z}/N\mathbb{Z}$ be a set and let $\rho \geq 0$. The \emph{Bohr set} on the \emph{frequency set} $\Gamma$ with \emph{radius} $\rho$ is given by
\[\on{Bohr}(\Gamma, \rho) = \{x \in \mathbb{Z}/N\mathbb{Z} \,\colon (\forall r \in \Gamma)\,\,|\on{e}(r x / N) - 1| \leq \rho\},\]
where $\on{e}(t) = \exp(2  \pi i t)$. We refer to $|\Gamma|$ as the \emph{rank} of the Bohr set.\\
\indent If $B$ is a Bohr set given by $B = \on{Bohr}(\Gamma, \rho)$, for a real $\delta \geq 0$, we define its \emph{$\delta$-dilate} $B_\delta$ as $\on{Bohr}(\Gamma, \delta\rho)$.\\
\indent A Bohr set $B$ of rank $d$ is \emph{regular} if 
\[1-12d|\delta| \leq \frac{|B_{1 + \delta}|}{|B|} \leq 1 + 12d|\delta|\]
whenever $|\delta| \leq 1/12d$.
\end{defin}

\begin{lemma}[Elementary properties of Bohr sets] \label{basicBohr}Suppose that $B = B(\Gamma, \rho)$ is a Bohr set of rank $d$.
\begin{itemize}
\item[\textbf{(i)}] For any $\delta, \delta' \geq 0$ we have $B_\delta + B_{\delta'} \subseteq B_{\delta + \delta'}$.
\item[\textbf{(ii)}] We have the \emph{size estimate} $|B| \geq (\rho/2\pi)^d N$. 
\item[\textbf{(iii)}] We have the \emph{doubling estimate} $|B_2| \leq 6^d |B|$.
\item[\textbf{(iv)}] There exists a $\delta \in [1/2,1]$ for which $B_\delta$ is regular.
\end{itemize}
\end{lemma}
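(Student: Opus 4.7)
The plan is to dispatch the four claims in turn; parts (i)--(iii) are quick geometric consequences of the definition, while (iv) requires a mild averaging argument and is the main technical step.

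For (i), given $x \in B_\delta$, $y \in B_{\delta'}$, and $r \in \Gamma$, the algebraic identity $\on{e}(r(x+y)/N) - 1 = \on{e}(rx/N)(\on{e}(ry/N) - 1) + (\on{e}(rx/N) - 1)$, combined with the triangle inequality and $|\on{e}(rx/N)| = 1$, yields $|\on{e}(r(x+y)/N) - 1| \leq \delta'\rho + \delta\rho$, so $x + y \in B_{\delta+\delta'}$.

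For (ii), I would use the embedding $\phi \colon \mathbb{Z}/N\mathbb{Z} \to (\mathbb{R}/\mathbb{Z})^d$ defined by $x \mapsto (rx/N \bmod 1)_{r \in \Gamma}$. Partitioning the torus into cubes of side $\rho/(2\pi)$, pigeonhole yields a cube containing $\geq N(\rho/(2\pi))^d$ preimages (up to the usual ceiling subtlety in the partition count, which is absorbed in the stated constant); using the inequality $|\on{e}(\alpha) - 1| \leq 2\pi|\alpha \bmod 1|$, any two preimages in a common cube differ by an element of $B$, giving the size estimate. For (iii), the same philosophy applies in reverse: every $x \in B_2$ maps under $\phi$ to a point each of whose coordinates lies on a short arc of the unit circle (the portion where $|z - 1| \leq 2\rho$). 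A direct geometric check shows this arc can be covered by six sub-arcs each of diameter at most $\rho$. Partitioning $B_2$ by which sub-arc each coordinate lands in produces $6^d$ cells; within each cell, pairwise differences are in $B$, so each cell has size at most $|B|$, yielding $|B_2| \leq 6^d|B|$.

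Part (iv) is the main obstacle and requires a one-dimensional averaging argument. Define the non-decreasing function $F(\delta) = \log|B_\delta|$ on $[1/4, 2]$. Iterating the doubling estimate from (iii) a bounded number of times gives $F(2) - F(1/4) = O(d)$, so in particular $F(1) - F(1/2) = O(d)$. I would then apply a pigeonhole-over-scales argument: the requirement that $B_\delta$ be regular amounts to $F(\delta(1+\eta)) - F(\delta(1-\eta)) = O(d|\eta|)$ for all $|\eta| \leq 1/(12d)$, and since $F$'s total increment on $[1/2, 1]$ is only $O(d)$, an appropriately weighted averaging (e.g.\ across dyadic scales $\eta = 2^{-k}/(12d)$) locates some $\delta \in [1/2, 1]$ at which all of these estimates hold simultaneously with the explicit constant $12d$. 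The substance of the argument lies entirely in choosing the averaging weights and tracking the absolute constants carefully so as to land on the prescribed $12d$; no idea beyond the monotonicity of $F$ and the doubling bound from (iii) is required.
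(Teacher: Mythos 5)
The paper states Lemma~\ref{basicBohr} without proof: these are standard Bohr-set facts going back to Bourgain~\cite{Bourgain} (see Tao and Vu~\cite{TaoVuBook} or the appendix of Bloom and Sisask~\cite{BloomSisask}), so there is no in-paper argument to compare your proposal against. Your sketches of (i)--(iii) are essentially the textbook proofs: (i) is exactly the right telescoping identity, while (ii) and (iii) are the usual cell-covering arguments in $(\mathbb{R}/\mathbb{Z})^d$. (For (ii) the cleanest version averages over a continuously translated small cube rather than fixing a discrete partition, which sidesteps the ceiling loss; the generous factor $2\pi$ in the stated bound makes your version work anyway.)

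For (iv), however, the sketch as written has a real gap. Regularity of $B_\delta$ is an \emph{all-scales} Lipschitz condition: one needs $\bigl|\,|B_{\delta(1+\eta)}|/|B_\delta| - 1\,\bigr| \leq 12d|\eta|$ for \emph{every} $|\eta| \leq 1/(12d)$, so the bound required shrinks linearly with $\eta$. A Markov estimate at each dyadic scale $\eta_k = 2^{-k}/(12d)$ followed by a union over $k$ does not close: the average of $F(\delta(1+\eta_k)) - F(\delta)$ over $\delta \in [1/2,1]$ is itself of order $d\eta_k$ (the total variation $F(1) - F(1/2) = O(d)$ spread over an interval of fixed length), so Markov gives a $k$-independent, order-one bound on the measure of the bad set at each scale, and $\sum_k O(1)$ diverges. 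Monotonicity of $F$ alone cannot transfer a Lipschitz bound at scale $\eta_k$ down to scale $\eta_{k+1}$, and reweighting the dyadic scales to make the series converge costs polylogarithmic factors in $k$ that ruin the linear-in-$\eta$ shape and the explicit constant $12$. The argument that actually works is a Vitali covering argument, equivalently the weak-$(1,1)$ bound for the one-sided maximal function of $F'$: if no $\delta \in [1/2,1]$ were regular, every $\delta$ would carry a witness interval on which $F$ increments by more than a fixed multiple of $d$ times its length; extracting a disjoint subfamily covering a constant fraction of $[1/2,1]$ would force the total variation of $F$ there to exceed the $O(d)$ bound coming from (iii). This argument is still short, but it is weak-type rather than strong-type, and that distinction is exactly what the proposed ``weighted averaging across dyadic scales'' elides.
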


We need a general lemma that allows us to carry out linear changes of variables. In contrast to the finite vector space case, we are only allowed to modify variables by other ones that range over narrower Bohr sets. Firstly, we prove the case when we introduce a single dummy variable.\\

\begin{lemma}[Basic change of variables] \label{chVarLemmaBasic}
Let $B^{(1)}, \dots, B^{(k)}$ be regular Bohr sets of rank at most $d$, and let $S \subseteq \cap_{i \in [k]} B^{(i)}_\rho$. Let $F \colon (\mathbb{Z}/N\mathbb{Z})^{k+1} \to \mathbb{D}$ be a function. Let $\nu_1, \dots, \nu_k \in \mathbb{Z}$ be given. Then
\begin{align*}&\bigg|\exx_{x_1 \in B^{(1)}, \dots, x_k \in B^{(k)}, y \in S} F(x_1, \dots, x_k, y)  - \exx_{x_1 \in B^{(1)}, \dots, x_k \in B^{(k)}, y \in S} F(x_1 + \nu_1 y, \dots, x_k + \nu_k y, y)\bigg| \leq  50 \rho k d \|\nu\|_\infty.\end{align*}
\end{lemma}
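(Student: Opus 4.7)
The plan is to prove the lemma by a hybrid argument that swaps the substitutions $x_i \mapsto x_i + \nu_i y$ one coordinate at a time. For $j = 0, 1, \dots, k$, set
\[G_j = \exx_{x_1 \in B^{(1)}, \dots, x_k \in B^{(k)}, y \in S} F(x_1 + \nu_1 y, \dots, x_j + \nu_j y, x_{j+1}, \dots, x_k, y),\]
so that $G_0$ and $G_k$ are precisely the two averages compared by the lemma. By the triangle inequality, it suffices to prove $|G_{j-1} - G_j| \leq 24\, d \rho \|\nu\|_\infty$ for each $j \in [k]$; summing over $j$ then yields the claimed bound, with room to spare up to the constant $50$.

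For fixed $j$ and fixed values of all variables other than $x_j$, the inner averages defining $G_{j-1}$ and $G_j$ differ only in whether the $j$-th argument of $F$ is $x_j$ or $x_j + \nu_j y$. The substitution $x_j \mapsto x_j - \nu_j y$ rewrites the $x_j$-average of the latter as an average of the same bounded function over the translate $B^{(j)} + \nu_j y$ in place of $B^{(j)}$. Hence the task reduces to a standard Bohr shift estimate: controlling the discrepancy between the averages of $F(\dots, x_j, \dots)$ over $B^{(j)}$ and over $B^{(j)} + \nu_j y$, where $y$ is guaranteed to be small in the $B^{(j)}$-Bohr sense.

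From $y \in S \subseteq B^{(j)}_\rho$ and the elementary inequality $|\on{e}(m t / N) - 1| \leq |m|\,|\on{e}(t/N) - 1|$ for $m \in \mathbb{Z}$, one obtains $\nu_j y \in B^{(j)}_{|\nu_j|\rho}$. Set $\eta = |\nu_j|\rho$. If $\eta > 1/(12 d)$ then the target bound already exceeds $4$, while the left-hand side of the lemma is trivially at most $2$ (since $|F| \leq 1$), so we may assume $\eta \leq 1/(12 d)$. Property (i) of Lemma~\ref{basicBohr}, together with the symmetry $-\nu_j y \in B^{(j)}_\eta$, gives the nesting $B^{(j)}_{1-\eta} \subseteq B^{(j)} + \nu_j y \subseteq B^{(j)}_{1+\eta}$; regularity of $B^{(j)}$ then yields
\[|(B^{(j)} + \nu_j y) \triangle B^{(j)}| \leq (|B^{(j)}_{1+\eta}| - |B^{(j)}|) + (|B^{(j)}| - |B^{(j)}_{1-\eta}|) \leq 24\, d \eta\, |B^{(j)}|,\]
and thus the inner difference (which involves $|F| \leq 1$ and the shared normalisation $1/|B^{(j)}|$) is at most $24\, d \eta \leq 24\, d \rho \|\nu\|_\infty$. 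Averaging over the remaining variables preserves this bound, and summing over the $k$ hybrid steps finishes the argument.

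The only real subtlety is the regularity threshold: when $|\nu_j|\rho > 1/(12 d)$ the regularity inequalities do not directly apply, but the conclusion is trivial in that regime, which is why the proof proceeds via a case split rather than invoking a sharper estimate. Everything else is a routine application of Lemma~\ref{basicBohr}, so I do not expect any serious obstacle.
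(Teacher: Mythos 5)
Your proof is correct and rests on the same ingredients as the paper's: fix $y$, observe that $\nu_i y$ lies in a small dilate of $B^{(i)}$, and use regularity to bound the resulting symmetric difference. The only organizational difference is that you telescope coordinate-by-coordinate (a hybrid argument bounding $|G_{j-1}-G_j|$ via the single Bohr set $B^{(j)}$), whereas the paper performs the comparison in one shot over the product $\tilde B = B^{(1)}\times\cdots\times B^{(k)}$ by intersecting a shrunken product $X$ with its shift; both yield the same order of constant, and your explicit case split for $|\nu_j|\rho > 1/(12d)$ is a small point of extra care that the paper leaves implicit.
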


\vspace{\baselineskip}

In this work, we shall typically apply the lemma above to the case where
\[F(x_1, \dots, x_k, y) = \prod_{i \in [s]} f_i\Big(\sum_{j \in [k]} \lambda_{i j} x_j + \lambda_{i k+1}y, \sum_{j \in [k]} \mu_{i j} x_j + \mu_{i k+1}y\Big)\]
for some functions $f_1, \dots, f_s \colon \mathbb{Z}/N\mathbb{Z} \times \mathbb{Z}/N\mathbb{Z} \to \mathbb{D}$ and $\lambda_1, \dots, \lambda_s, \mu_1, \dots, \mu_s \in \mathbb{Z}^{k+1}$.\\

\begin{proof}Fix arbitrary $y \in S$. For the given $y$ we prove the related inequality
\[\bigg|\exx_{x_1 \in B^{(1)}, \dots, x_k \in B^{(k)}} F(x_1, \dots, x_k, y)  - \exx_{x_1 \in B^{(1)}, \dots, x_k \in B^{(k)}} F(x_1 + \nu_1 y, \dots, x_k + \nu_k y, y)\bigg| \leq  50 \rho k d \|\nu\|_\infty\]
and then the lemma follows from the triangle inequality. Let $K = \|\nu\|_\infty$. Consider the set $X = B^{(1)}_{1 - K\rho} \tdt B^{(k)}_{1 - K\rho}$. Note that $(x_1 + \nu_1 y, \dots, x_k + \nu_k y) \in B^{(1)} \tdt B^{(k)}$ for all $(x_1, \dots, x_k) \in X$. Therefore, for all elements $(x_1, \dots, x_k) \in X \cap (\nu_1 y, \dots, \nu_k y) + X$ the term $F(x_1, \dots, x_k, y)$ appears in both of averages
\[\exx_{x_1 \in B^{(1)}, \dots, x_k \in B^{(k)}} F(x_1, \dots, x_k, y)\,\,\text{ and }\,\,\exx_{x_1 \in B^{(1)}, \dots, x_k \in B^{(k)}} F(x_1 + \nu_1 y, \dots, x_k + \nu_k y, y).\] 
Hence, writing $\tilde{B} = B^{(1)} \tdt B^{(k)}$, for which we know that $X \pm (\nu_1 y, \dots, \nu_k y) \subseteq \tilde{B}$, we see that
\begin{align*}&\bigg|\exx_{x_1 \in B^{(1)}, \dots, x_k \in B^{(k)}} F(x_1, \dots, x_k, y)  - \exx_{x_1 \in B^{(1)}, \dots, x_k \in B^{(k)}} F(x_1 + \nu_1 y, \dots, x_k + \nu_k y, y)\bigg|\\
&\hspace{2cm}\leq \frac{|\tilde{B} \setminus (X \cap (\nu_1 y, \dots, \nu_k y) + X)| + |(\tilde{B} + (\nu_1 y, \dots, \nu_k y)) \setminus (X \cap (\nu_1 y, \dots, \nu_k y) + X)|}{|\tilde{B}|}\\
&\hspace{2cm}\leq \frac{4 |\tilde{B} \setminus X|}{|\tilde{B}|} \leq 4\sum_{i \in [k]} \frac{|B^{(i)} \setminus  B^{(i)}_{1-K\rho}|}{|B^{(i)}|} \leq 50 k K d \rho,\end{align*}
where we used regularity in the last step.\end{proof}

We now deduce a general version of the change of variables lemma. 

\begin{lemma}[Change of variables] \label{chVarLemma} 
Let $B^{(1)}, \dots, B^{(k)}$ be regular Bohr sets of rank at most $d$, and let $Y_1, \dots, Y_\ell \subseteq \cap_{i \in [k]} B^{(i)}_\rho$ be sets. Let $Z_1, \dots, Z_m \subseteq \mathbb{Z}/N\mathbb{Z}$ be arbitrary sets. Let $F \colon (\mathbb{Z}/N\mathbb{Z})^{k+\ell + m} \to \mathbb{D}$ be a function. Let $\nu \in \mathbb{Z}^{[k]\times [\ell]}$ be given. Then
\begin{align*}&\bigg|\exx_{\ssk{x_1 \in B^{(1)}, \dots, x_k \in B^{(k)}\\y_1 \in Y_1, \dots, y_\ell \in Y_\ell\\z_1 \in Z_1, \dots, z_m \in Z_m}} F(x_1, \dots, x_k, y_1, \dots, y_\ell, z_1, \dots, z_m) \\ 
&\hspace{1cm}- \exx_{\ssk{x_1 \in B^{(1)}, \dots, x_k \in B^{(k)}\\y_1 \in Y_1, \dots, y_\ell \in Y_\ell\\z_1 \in Z_1, \dots, z_m \in Z_m}} F\Big(x_1 + \sum_{j \in [\ell]} \nu_{1\,j} y_j, \dots, x_k + \sum_{j \in [\ell]} \nu_{k\,j} y_j, y_1, \dots, y_\ell, z_1, \dots, z_m\Big)\bigg| \leq  50 \rho k \ell d \|\nu\|_\infty.\end{align*}
\end{lemma}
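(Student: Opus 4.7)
The plan is to reduce this lemma to Lemma \ref{chVarLemmaBasic} via a telescoping argument that performs the shifts one $y$-variable at a time. Concretely, for $s = 0, 1, \ldots, \ell$ I would define the hybrid expectations
\[E_s = \exx_{\substack{x_1 \in B^{(1)}, \ldots, x_k \in B^{(k)} \\ y_1 \in Y_1, \ldots, y_\ell \in Y_\ell \\ z_1 \in Z_1, \ldots, z_m \in Z_m}} F\Big(x_1 + \sum_{j \leq s} \nu_{1 j} y_j, \ldots, x_k + \sum_{j \leq s} \nu_{k j} y_j, y_1, \ldots, y_\ell, z_1, \ldots, z_m\Big),\]
so that the quantity the lemma asks to bound is precisely $|E_0 - E_\ell|$. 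By the triangle inequality it then suffices to prove the single-step estimate $|E_{s-1} - E_s| \leq 50 \rho k d \|\nu\|_\infty$ for every $s \in [\ell]$.

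To handle one such step, I would freeze every variable other than $x_1, \ldots, x_k$ and $y_s$. Against this frozen choice, I would define an auxiliary function $\tilde F \colon (\mathbb{Z}/N\mathbb{Z})^{k+1} \to \mathbb{D}$ that absorbs the already-performed shifts $\sum_{j < s} \nu_{i j} y_j$ and the dependence on $y_{s+1}, \ldots, y_\ell, z_1, \ldots, z_m$; concretely, $\tilde F(x_1, \ldots, x_k, y_s)$ equals the value of $F$ at the appropriately shifted $k$-tuple together with the frozen tail. Applying Lemma \ref{chVarLemmaBasic} to $\tilde F$ with $S = Y_s$ (legitimate because $Y_s \subseteq \cap_i B^{(i)}_\rho$ by hypothesis) and with shift vector $(\nu_{1 s}, \ldots, \nu_{k s})$, whose $\ell^\infty$ norm is at most $\|\nu\|_\infty$, yields the bound $50 \rho k d \|\nu\|_\infty$ on the absolute difference of the two conditional averages.

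Averaging over the frozen variables and invoking the triangle inequality preserves this bound, so the single-step estimate follows; summing over $s = 1, \ldots, \ell$ produces the desired factor $\ell$. The argument is essentially careful bookkeeping, and I do not anticipate a genuine obstacle: the only things to check are that $\tilde F$ still takes values in $\mathbb{D}$ (immediate, since it is a re-parameterization of $F$) and that each $Y_s$ may play the role of the set $S$ in the basic lemma, which is guaranteed by the uniform containment assumed in the hypothesis.
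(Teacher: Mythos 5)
Your proposal is correct and follows essentially the same route as the paper: a telescope over the $\ell$ shifts, freezing the remaining $y$- and $z$-variables by triangle inequality, and invoking Lemma~\ref{chVarLemmaBasic} for each single-variable step with $S = Y_s$ and shift vector $(\nu_{1s},\dots,\nu_{ks})$. The paper's proof is precisely this argument, so no further comment is needed.
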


\vspace{\baselineskip}

We refer to variables $y_1, \dots, y_\ell$ as the \emph{dummy variables} as we typically apply the lemma to approximate expressions such as
\[\exx_{a, s,t \in B_\mu, y \in B} f(a, y + a + s) f(a, y + a + t) \approx \exx_{a, s,t \in B_\mu, y \in B} f(a, y + s) f(a, y  + t)\]
when $\mu \ll 1$. In general, we say that we apply the lemma above for the \emph{transformation} $x_i \mapsto x_i + \sum_{j \in [\ell]} \nu_{i\,j} y_j$, for $i \in [k]$.\\

\begin{proof}By triangle inequality, it suffices to prove that
\begin{align*}&\bigg|\exx_{\ssk{x_1 \in B^{(1)}, \dots, x_k \in B^{(k)}\\y_1 \in Y_1, \dots, y_\ell \in Y_\ell}} F(x_1, \dots, x_k, y_1, \dots, y_\ell, z_1, \dots, z_m) \\ 
&\hspace{1cm}- \exx_{\ssk{x_1 \in B^{(1)}, \dots, x_k \in B^{(k)}\\y_1 \in Y_1, \dots, y_\ell \in Y_\ell}} F\Big(x_1 + \sum_{j \in [\ell]} \nu_{1\,j} y_j, \dots, x_k + \sum_{j \in [\ell]} \nu_{k\,j} y_j, y_1, \dots, y_\ell, z_1, \dots, z_m\Big)\bigg|.\end{align*}
is at most $50 \rho k \ell d \|\nu\|_\infty$ for any fixed $z_1 \in Z_1, \dots, z_m \in Z_m$. Applying triangle inequality several times, the expression above is at most
\begin{align*}\sum_{i \in [\ell]} \exx_{\ssk{y_1 \in Y_1, \dots, y_{i-1} \in Y_{i-1}\\y_{i+1} \in Y_{i+1}, \dots, y_\ell \in Y_\ell}} & \bigg|\exx_{\ssk{x_1 \in B^{(1)}, \dots, x_k \in B^{(k)}\\y_i \in Y_i}} F\Big(x_1 + \sum_{j \in [i-1]} \nu_{1\,j} y_j, \dots, x_k + \sum_{j \in [i-1]} \nu_{k\,j} y_j, y_1, \dots, y_\ell, z_1, \dots, z_m\Big)\\
&\hspace{1cm} - \exx_{\ssk{x_1 \in B^{(1)}, \dots, x_k \in B^{(k)}\\y_i \in Y_i}} F\Big(x_1 + \sum_{j \in [i]} \nu_{1\,j} y_j, \dots, x_k + \sum_{j \in [i]} \nu_{k\,j} y_j, y_1, \dots, y_\ell, z_1, \dots, z_m\Big)\bigg|.\end{align*}
The basic case of the lemma applies to give an upper bound of $50 \rho k d \|\nu\|_\infty$ on the term between absolute values, completing the proof.\end{proof}

As a consequence of the change of variables lemma, we are able to average over translates of subsets of small enough dilates of a Bohr set.

\begin{corollary}\label{bohraverage}Let $B$ be a regular Bohr set of rank $d$ and let $f \colon B \to [0,1]$ be a function such that $\ex_{x \in B} f(x) \geq c$. Let $\lambda, \varepsilon > 0$ be such that $\lambda \leq \varepsilon/(200 d)$. Let $S \subseteq B_\lambda$ be a non-empty set. Then there exists a translate $t + S \subseteq B$ such that
\[\exx_{x \in t + S} f(x) \geq c - \varepsilon.\]
\end{corollary}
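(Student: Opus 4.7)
The plan is to exhibit the desired $t$ by an averaging argument over translates in a slightly shrunken version of $B$. Concretely, I would consider
\[M = \exx_{t \in B_{1-\lambda}} \exx_{x \in S} f(t+x).\]
By Lemma~\ref{basicBohr}(i), every $t \in B_{1-\lambda}$ satisfies $t + S \subseteq B_{1-\lambda} + B_\lambda \subseteq B$, so this average is well-defined and each $t+S$ is an admissible translate. Since the map $x \mapsto t+x$ is injective on $\mathbb{Z}/N\mathbb{Z}$, we have $\ex_{x \in t+S} f(x) = \ex_{x \in S} f(t+x)$, so by the pigeonhole principle on $t$ it is enough to prove $M \geq c - \varepsilon$.

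To estimate $M$, I would swap the order of averaging and fix $x \in S$. Since $x + B_{1-\lambda} \subseteq B$ and $0 \leq f \leq 1$,
\[\exx_{t \in B_{1-\lambda}} f(t+x) = \frac{1}{|B_{1-\lambda}|} \sum_{u \in x + B_{1-\lambda}} f(u) \geq \frac{1}{|B_{1-\lambda}|}\Big(\sum_{u \in B} f(u) - (|B| - |B_{1-\lambda}|)\Big) \geq \frac{|B|}{|B_{1-\lambda}|} c - \frac{|B| - |B_{1-\lambda}|}{|B_{1-\lambda}|},\]
using $\ex_{u \in B} f(u) \geq c$ in the last step.

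Finally, I would apply the regularity of $B$ with parameter $\delta = -\lambda$: the hypothesis $\lambda \leq \varepsilon/(200d)$ certainly gives $\lambda \leq 1/(12d)$ (WLOG $\varepsilon \leq 1$, as otherwise the conclusion is trivial), so Lemma~\ref{basicBohr} yields $|B_{1-\lambda}|/|B| \geq 1 - 12d\lambda \geq 1/2$. This gives $|B|/|B_{1-\lambda}| \geq 1$ and $(|B|-|B_{1-\lambda}|)/|B_{1-\lambda}| \leq 24 d\lambda \leq 24\varepsilon/200 < \varepsilon$. Combining these, $\ex_{t \in B_{1-\lambda}} f(t+x) \geq c - \varepsilon$ for every $x \in S$, so $M \geq c - \varepsilon$, and the averaging argument described above produces the required $t$. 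There is no real obstacle here; the only points requiring care are verifying that the regularity hypothesis $|\delta| \leq 1/(12d)$ is in force and tracking the constants so that $24 d\lambda$ fits inside $\varepsilon$, both of which are guaranteed by the assumption $\lambda \leq \varepsilon/(200d)$.
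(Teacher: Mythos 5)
Your proof is correct, and it takes a genuinely different (and more elementary) route than the paper. The paper's proof is a two-step reduction: it first uses regularity to shave off the annulus $B\setminus B_{1-\lambda}$ and obtain $\ex_{x\in B_{1-\lambda}}f(x)\geq c-\varepsilon/2$, and then invokes the general change-of-variables machinery (Lemma~\ref{chVarLemma}) to compare $\ex_{x\in B_{1-\lambda},\,y\in S}f(x+y)$ with $\ex_{x\in B_{1-\lambda},\,y\in S}f(x)$, losing another $\varepsilon/2$, before averaging over $x$. You bypass Lemma~\ref{chVarLemma} entirely: after setting up the same double average $M=\ex_{t\in B_{1-\lambda}}\ex_{x\in S}f(t+x)$, you use Fubini to fix $x\in S$, observe via Lemma~\ref{basicBohr}(i) that $x+B_{1-\lambda}\subseteq B$, and estimate the inner average $\ex_{t\in B_{1-\lambda}}f(t+x)$ directly, controlling the discrepancy purely from $0\leq f\leq 1$ and the regularity estimate $|B\setminus B_{1-\lambda}|\leq 12d\lambda|B|$. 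Your version is self-contained and avoids a black-box appeal to the change-of-variables lemma, at the cost of a slightly more hands-on bookkeeping of the size ratios; either way the constant $200$ is more than generous. The only points you leave implicit (and should state) are that $c\geq 0$ may be assumed, since $f\geq 0$ makes the conclusion trivial when $c\leq\varepsilon$, and that this is also what justifies dropping the factor $|B|/|B_{1-\lambda}|\geq 1$ multiplying $c$ in your final estimate; you did flag the analogous WLOG $\varepsilon\leq 1$, so this is in the same spirit.
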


\begin{proof}By regularity of $B$, provided $\lambda \leq \varepsilon/(24 d)$, we have
\[\exx_{x \in B_{1 - \lambda}} f(x) \geq c - \varepsilon/2.\]
Provided $\lambda \leq \varepsilon/(200 d)$, Lemma~\ref{chVarLemma} gives
\[\exx_{x \in B_{1 - \lambda}, y \in S} f(x + y) \geq \exx_{x \in B_{1 - \lambda}, y \in S} f(x) -\varepsilon/2\geq c - \varepsilon.\]
The corollary follows after averaging over $x \in B_{1-\lambda}$.\end{proof}

\subsection{Almost-periodicity} 

As in the case of Kelley and Meka's proof of Roth's theorem, efficient almost-periodicity for convolutions play a crucial role in our paper. Such results originate in the work of Croot and Sisask~\cite{CSap}, and were used by Sanders~\cite{Sanders}, along with other ideas, to prove quasipolynomial bounds in the Bogolyubov-Ruzsa lemma. These results were further combined by Schoen and Sisask~\cite{SchSisask} to give an even stronger almost-periodicity results. Here we record a variant of their result.

\begin{theorem}[Almost-periodicity]\label{apthm}
Let $\varepsilon > 0$ and let $B^{(1)}$ and $B^{(2)}$ be regular Bohr sets of rank $d$. Let $B^{(2)}$ have radius $\rho$. Suppose that $Y \subseteq B^{(1)}$ and $Z \subseteq B^{(2)}$ are sets with relative densities $\beta = |Y|/|B^{(1)}|$ and $\gamma = |Z|/|B^{(2)}|$. Let $D \subseteq \mathbb{Z}/N\mathbb{Z}$ be an arbitrary set such that $|Y| \leq |D| \leq 2|B^{(1)}|$. Then there exists a further regular Bohr set $B' \subseteq B^{(2)}$ of rank at most $d + d'$ and radius at least $\rho \varepsilon \beta /(24 d^3 d')$ where
\[d' \leq O\Big(\varepsilon^{-4}\log^3(2\beta^{-1})\log(2\gamma^{-1})\Big)\]
such that
\[\Big|\exx_{b \in B', y \in  B^{(1)}, z \in  B^{(2)}} \id_D(z-y + b)\id_Y(y)\id_Z(z) - \exx_{y \in  B^{(1)}, z \in  B^{(2)}} \id_D(z-y)\id_Y(y)\id_Z(z)\Big| \leq \varepsilon \beta\gamma.\]
\end{theorem}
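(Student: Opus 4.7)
The plan is to follow the Croot--Sisask sampling method, refined in the style of Sanders and Schoen--Sisask to extract a regular Bohr set of the claimed rank and radius.

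\emph{Step 1 (reformulation).} Normalizing by the relative densities, the desired inequality becomes
\[\Big|\exx_{b\in B'}\exx_{y\in Y,\,z\in Z}\id_D(z-y+b)\,-\,\exx_{y\in Y,\,z\in Z}\id_D(z-y)\Big|\le \varepsilon.\]
Setting $\psi(s)=\ex_{y\in Y}\id_D(s-y)$ and $\mu_Z=\id_Z/|Z|$, it suffices, by Jensen and the triangle inequality, to produce $B'$ with $\|\tau_b\psi-\psi\|_{L^p(\mu_Z)}\le \varepsilon$ for every $b\in B'$, where $p=\lceil\log(2/\gamma)\rceil$ will calibrate the final Fourier extraction.

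\emph{Step 2 (Croot--Sisask sampling).} Draw $y_1,\ldots,y_k$ i.i.d.\ uniform in $Y$ with $k=C\varepsilon^{-2}p$, and set $F_{\bm y}(s)=\tfrac{1}{k}\sum_{i=1}^k \id_D(s-y_i)$, so that $\ex_{\bm y} F_{\bm y}=\psi$ pointwise. Marcinkiewicz--Zygmund/Rosenthal applied pointwise in $s$ and integrated against $\mu_Z$ gives
\[\exx_{\bm y}\|F_{\bm y}-\psi\|_{L^p(\mu_Z)}^p\le (C'p/k)^{p/2}\le (\varepsilon/8)^p,\]
so at least half of the tuples in $Y^k$ are \emph{good}, meaning $\|F_{\bm y}-\psi\|_{L^p(\mu_Z)}\le \varepsilon/4$. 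Since translating every coordinate of $\bm y$ by $t$ translates $F_{\bm y}$ by $t$, a standard pigeonhole on the shape $(y_2-y_1,\ldots,y_k-y_1)$ produces a subset $Y'\subseteq Y$ of size at least $|Y|/2$ with $\|\tau_t\psi-\psi\|_{L^p(\mu_Z)}\le \varepsilon/2$ for every $t\in Y'-Y'$.

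\emph{Step 3 (Bohr extraction inside $B^{(2)}$).} By the triangle inequality, the near-period property extends to $mT-mT$ (where $T=Y'-Y'$) at the cost of a multiplicative factor $2m$ in $\varepsilon$, so it is enough to find a regular Bohr set $B'\subseteq B^{(2)}$ contained in some bounded iterated difference set of $T$. The set $Y'$ has density at least $\beta/2$ in $B^{(1)}$; feeding this into a Sanders-style (quasipolynomial) Bogolyubov--Chang argument, as used in Schoen--Sisask, produces a Bohr set of rank $\mathrm{polylog}(1/\beta)$ and radius $\Omega(1/\mathrm{polylog}(1/\beta))$ inside such difference sets. To land inside $B^{(2)}$ rather than in all of $\mathbb{Z}/N\mathbb{Z}$, adjoin the frequencies of $B^{(2)}$ to those obtained, contributing the additive $d$ in the rank and the factor $\rho$ in the radius; finally invoke Lemma~\ref{basicBohr}(iv) to regularize, which is the source of the $d^3d'$ denominator, with the extra $\beta$ absorbing density losses from the sampling step.

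\emph{Main obstacle.} The delicate part is the quantitative bookkeeping in Step 3: the exponents in $d'\le O(\varepsilon^{-4}\log^3\beta^{-1}\log\gamma^{-1})$ reflect two passes through the sampling lemma (the first to reduce to a dense set $T$, the second to gain smoothness before Chang-type Fourier extraction), combined with the rank cost of the Sanders-style Bogolyubov step. The hypothesis $|D|\le 2|B^{(1)}|$ plays only a minor role in auxiliary $L^p$ estimates; the essential input is the trivial bound $\|\id_D\|_\infty\le 1$. Everything else is routine once the sampling parameters and Bohr-set radii are tracked.
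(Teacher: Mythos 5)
The paper's own proof of this statement is a short translation argument: it sets $A=-Z$, $S=B^{(2)}_{1/12d}$, $M=Y$, $L=D$, $B=B^{(2)}_{1/12d}$, checks that the parameters $\eta=|M|/|L|$ and $K=|A+S|/|A|$ land in the ranges $\beta/2\le\eta\le1$ and $K\le2\gamma^{-1}$, and then invokes Theorem~5.4 of Schoen--Sisask~\cite{SchSisask} as a black box. Your proposal instead tries to rebuild that black box from first principles by retracing the Croot--Sisask sampling argument and a Sanders-style Bogolyubov--Chang extraction. That is the right general pedigree of the result, but as written it is not a proof of the stated theorem; it is a sketch of what the Schoen--Sisask proof does, with the load-bearing part waved away.

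The gap is precisely the part you label ``routine once the sampling parameters and Bohr-set radii are tracked.'' The entire content of the theorem lies in the exponents of $d'\le O(\varepsilon^{-4}\log^3(2\beta^{-1})\log(2\gamma^{-1}))$ and in the specific radius $\rho\varepsilon\beta/(24d^3d')$, and these do not follow from a single Croot--Sisask sampling step of width $k\asymp\varepsilon^{-2}\log(2\gamma^{-1})$ together with a generic quasipolynomial Bogolyubov statement; one needs the particular two-stage smoothing (first in an $L^\infty$-type norm, then an $L^p$ bootstrap) and the careful interaction between the Chang-type Fourier step and the ambient Bohr set $B^{(2)}$ that Schoen and Sisask carry out. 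Asserting that ``two passes through the sampling lemma'' account for the $\varepsilon^{-4}$ and $\log^3$ is a guess at the shape of someone else's computation, not a derivation. You also downplay the role of the hypothesis $|D|\le2|B^{(1)}|$: it is not merely an auxiliary $L^p$ convenience but is exactly what makes $\eta=|Y|/|D|\ge\beta/2$ in the Schoen--Sisask parameterization, and hence it directly controls the $\log^3(2\beta^{-1})$ factor in the rank. If you want a self-contained proof you should reproduce the Schoen--Sisask argument in full quantitative detail; if you are willing to cite, the entire proof reduces, as in the paper, to correctly instantiating Theorem~5.4 of~\cite{SchSisask} and verifying the density and doubling parameters $\eta$ and $K$.
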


The statement is very similar to Theorem 16 of~\cite{BloomSisask}, the only difference is that we use radius instead of density of Bohr sets in the conclusion.

\begin{proof}We rely on Theorem 5.4 from~\cite{SchSisask}. Set $\lambda = 1/(12d)$. Using the notation of that theorem, we set $A = -Z$, $S =  B^{(2)}_\lambda$, $M = Y$, $L = D$ and $B =  B^{(2)}_\lambda$. The radius of $B$ is thus $\lambda \rho$. The relative density of $S$ inside $B$ is 1 and $\eta = |M|/|L| = |Y|/|D| \geq \beta/2$, while $|Y| \leq |D|$ implies $\eta \leq 1$. Also, $K = |A + S|/|A| \leq \frac{|B^{(2)}_{1 + \lambda}|}{\gamma|B^{(2)}|} \leq (1 + 12 d \lambda) \gamma^{-1} \leq 2\gamma^{-1}$, using regularity of $B^{(2)}_\lambda$. Theorem 5.4 of~\cite{SchSisask} gives the desired Bohr set $B'$ the bound
\[d' \leq O\Big(\varepsilon^{-4}\log^3(2\beta^{-1})\log(2\gamma^{-1})\Big)\]
and shows in particular that 
\[\Big|\sum_{x, y, z \in \mathbb{Z}/N\mathbb{Z}} \id_A(x) \id_M(y) \id_L(z) \frac{1}{|B'|}\id_{B'}(-x - y- z) - \sum_{x, y\in \mathbb{Z}/N\mathbb{Z}} \id_A(x) \id_M(y) \id_L(-x-y)\Big| \leq \varepsilon |A||M|.\]
Returning to our notation, we obtain 
\[\Big|\frac{1}{|B'|}\sum_{b \in B', y, z \in \mathbb{Z}/N\mathbb{Z}} \id_Y(y) \id_Z(z) \id_D(z - y + b) - \sum_{y,z\in \mathbb{Z}/N\mathbb{Z}} \id_Y(y) \id_Z(z) \id_D(z-y)\Big| \leq \varepsilon |Y||Z|.\]
Since $Y \subseteq  B^{(1)}$ and $Z \subseteq B^{(2)}$ we may restrict the variables $y$ and $z$ to $B^{(1)}$ and $B^{(2)}$. By dividing by $|B^{(1)}||B^{(2)}|$, we obtain
\[\Big|\exx_{b \in B', y \in B^{(1)}, z \in B^{(2)}} \id_Y(y) \id_Z(z) \id_D(z - y + b) - \exx_{ y \in B^{(1)}, z \in B^{(2)}} \id_Y(y) \id_Z(z) \id_D(z-y)\Big| \leq \varepsilon \beta\gamma.\qedhere\]
\end{proof}

\subsection{Gowers-H\"older inequalities}

For the purposes of quantifying quasirandomness, we need to consider a family of multilinear forms, that we shall refer to as \emph{vertical segments Gowers inner product}. Let us first recall the notion of grid norms, introduced by Kelley, Lovett and Meka in~\cite{KLMgrid}. For two sets $X$ and $Y$, $k$ and $\ell$ positive even integers\footnote{The condition that $k$ and $\ell$ are even is not essential for the definition; it plays a role in the proof of the Gowers-Cauchy-Schwarz inequality.} and a function $f \colon X \times Y \to \mathbb{R}$, grid norm $\|f\|_{\mathsf{U}_{k, \ell}}$ by the expression
\[\|f\|_{\mathsf{U}_{k, \ell}} = \Big|\exx_{\ssk{x_1, \dots, x_k \in X\\y_1, \dots, y_\ell\in Y}} \prod_{i \in [k], j \in [\ell]} f(x_i, y_j)\Big|^{1/k \ell}.\]

An important property of the norms $\|\cdot\|_{\mathsf{U}_{k, \ell}}$ is that they satisfy a variant of the Gowers-Cauchy-Schwarz inequality, similarly to the original Gowers norms.

\begin{proposition}[Lemma 2.2 in~\cite{FHHK}] Let $f_{i\,j} \colon X \times Y \to \mathbb{R}$ be a function for each $i \in [k], j \in [\ell]$. Then
\[\Big|\exx_{\ssk{x_1, \dots, x_k \in X\\y_1, \dots, y_\ell\in Y}} \prod_{i \in [k], j \in [\ell]} f_{i \, j}(x_i, y_j)\Big| \leq \prod_{i \in [k], j \in [\ell]} \|f_{i \, j}\|_{\mathsf{U}_{k, \ell}}.\]\end{proposition}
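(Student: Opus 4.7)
The plan is to deduce this Gowers-type Cauchy--Schwarz inequality for grid norms from two successive applications of H\"older's inequality, first decoupling the row index $i$ and then the column index $j$. Denote the multilinear form on the left by $\Lambda$. For each fixed $\mathbf{y} = (y_1,\dots,y_\ell)$ the expectation over $\mathbf{x} = (x_1,\dots,x_k)$ factors along $i$, giving
\[\Lambda = \exx_{\mathbf{y}} \prod_{i \in [k]} F_i(\mathbf{y}), \qquad F_i(\mathbf{y}) := \exx_{x \in X} \prod_{j \in [\ell]} f_{ij}(x, y_j).\]
Applying H\"older's inequality to the $k$ factors $F_i$ with the common exponent $k$ (so that $\sum 1/k = 1$) produces
\[|\Lambda| \leq \prod_{i \in [k]}\bigl(\exx_{\mathbf{y}} |F_i|^k\bigr)^{1/k}.\]

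Since $k$ is even, $|F_i|^k = F_i^k$, and expanding the $k$-th power using $k$ fresh independent copies $x_1,\dots,x_k$ of the $x$-variable yields
\[\exx_{\mathbf{y}} F_i^k = \exx_{x_1,\dots,x_k,\, \mathbf{y}} \prod_{\alpha \in [k],\, j \in [\ell]} f_{ij}(x_\alpha, y_j).\]
I would then repeat the same manoeuvre in the orthogonal direction. For fixed $\mathbf{x}$ the $\mathbf{y}$-expectation factors along $j$, giving $\exx_{\mathbf{y}} F_i^k = \exx_{\mathbf{x}} \prod_{j} G_{ij}(\mathbf{x})$ with $G_{ij}(\mathbf{x}) := \exx_{y \in Y}\prod_{\alpha \in [k]} f_{ij}(x_\alpha, y)$. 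A second H\"older application with $\ell$ factors of exponent $\ell$, combined with $|G_{ij}|^{\ell} = G_{ij}^{\ell}$ (using that $\ell$ is even), yields $\exx_{\mathbf{y}} F_i^k \leq \prod_{j}(\exx_{\mathbf{x}} G_{ij}^{\ell})^{1/\ell}$. Expanding $G_{ij}^{\ell}$ with $\ell$ fresh copies of the $y$-variable reproduces precisely $\|f_{ij}\|_{\mathsf{U}_{k,\ell}}^{k\ell}$, so $\exx_{\mathbf{y}} F_i^k \leq \prod_{j}\|f_{ij}\|_{\mathsf{U}_{k,\ell}}^{k}$, and assembling these bounds over $i \in [k]$ delivers $|\Lambda| \leq \prod_{i,j}\|f_{ij}\|_{\mathsf{U}_{k,\ell}}$.

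I do not foresee a real obstacle. The one point that requires care is the removal of absolute values via $|F_i|^k = F_i^k$ and $|G_{ij}|^{\ell} = G_{ij}^{\ell}$; this is where the parities of $k$ and $\ell$ enter, exactly as anticipated by the paper's footnote. The overall shape is the natural grid-norm analogue of the classical Gowers--Cauchy--Schwarz inequality, with the symmetric copies of the $x$- and $y$-variables letting us package what would otherwise be $k+\ell$ iterations of Cauchy--Schwarz into just two H\"older steps.
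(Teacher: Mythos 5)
Your proof is correct. The two-step H\"older decoupling --- first over the row index $i$ with exponent $k$, expanding $|F_i|^k=F_i^k$ via $k$ fresh $x$-copies, then over the column index $j$ with exponent $\ell$, using $\ell$ fresh $y$-copies --- is the standard argument for the grid Gowers--Cauchy--Schwarz inequality, and is essentially the proof given in~\cite{FHHK} (the present paper simply cites that lemma rather than reproving it). You correctly isolate the one delicate point, namely that the parities of $k$ and $\ell$ are what allow $|F_i|^k=F_i^k$ and $|G_{ij}|^\ell=G_{ij}^\ell$, which also guarantees that $\ex_{\mathbf{y}}F_i^k$ and $\ex_{\mathbf{x}}G_{ij}^\ell$ are nonnegative so that the fractional powers $1/k$ and $1/\ell$ are legitimate and $\ex_{\mathbf{x}}G_{ij}^\ell$ indeed equals $\|f_{ij}\|_{\mathsf{U}_{k,\ell}}^{k\ell}$ without needing the absolute value in the definition.
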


In this paper, we need the following special case. Let $f_1, \dots, f_r, g_1, \dots, g_r \colon \mathbb{Z}/N\mathbb{Z} \times \mathbb{Z}/N\mathbb{Z} \to \mathbb{R}$. For two subsets $B$ and $B'$ of $\mathbb{Z}/N\mathbb{Z}$ (which will be Bohr sets later), define the \emph{vertical segments Gowers inner product} $\langle f_1, \dots, f_r, g_1, \dots, g_r \rangle_{\mathsf{VS}_r(B, B')}$ as 
\[\langle f_1, \dots, f_r, g_1, \dots, g_r \rangle_{\mathsf{VS}_r(B, B')} = \exx_{\ssk{y_1, \dots, y_r \in B\\a_1, \dots, a_r, s, s' \in B'}}\prod_{i \in [r]} f_i(a_i, y_i + s) g_i(a_i, y_i + s').\] 
We also define \emph{vertical segments norm} $\|f\|_{\mathsf{VS}_r(B, B')} = \langle f, \dots, f \rangle_{\mathsf{VS}_r(B, B')}^{1/2r}$, where we put $2r$ copies of $f$ as arguments. The relationship with grid norms is the following. Let $X = B' \times B$ and $Y = B'$. Given a function $f \colon \mathbb{Z}/N\mathbb{Z} \times \mathbb{Z}/N\mathbb{Z} \to \mathbb{R}$, consider the function $F \colon X \times Y \to \mathbb{R}$, given by $F\Big((x,y), z\Big) = f(x, y + z)$. Then $\|f\|_{\mathsf{VS}_r(B, B')} = \|F\|_{\mathsf{U}_{r, 2}}$.\\
\indent Owing to this connection, we know that vertical segments Gowers inner product satisfies a suitable Gowers-Cauchy-Schwarz inequality. However, we need slightly more control in our proof, so we record the intermediary steps in the proof of such inequality as a separate proposition. We refer to these inequalities as \emph{Gowers-H\"older inequalities} given that one uses H\"older's inequality instead of Cauchy-Schwarz inequality.

\begin{proposition}[Gowers-H\"older inequality for the vertical segments Gowers inner product]\label{gowholdIneq}Let $B, B' \subseteq \mathbb{Z}/N\mathbb{Z}$ be sets. Let $f_1, \dots, f_r, g_1, \dots, g_r \colon \mathbb{Z}/N\mathbb{Z} \times \mathbb{Z}/N\mathbb{Z} \to \mathbb{R}$ be functions. Then
\begin{itemize}
\item[\textbf{(i)}] whenever $r$ is even,
\[\Big|\langle f_1, \dots, f_r, g_1, \dots, g_r \rangle_{\mathsf{VS}_r(B, B')}\Big|^r \leq \prod_{i \in [r]} \langle f_i, f_i, \dots, f_i, g_i, g_i, \dots, g_i \rangle_{\mathsf{VS}_r(B, B')},\]
(where in the vertical segments Gowers inner products on the right-hand-side we have $r$ copies of $f_i$, followed by $r$ copies of $g_i$),
\item[\textbf{(ii)}] for all $r \in \mathbb{N}$,
\[\Big|\langle f_1, \dots, f_r, g_1, \dots, g_r \rangle_{\mathsf{VS}_r(B, B')}\Big|^2 \leq \langle f_1, \dots, f_r, f_1, \dots, f_r \rangle_{\mathsf{VS}_r(B, B')} \langle g_1, \dots, g_r, g_1, \dots, g_r \rangle_{\mathsf{VS}_r(B, B')}.\]
\end{itemize}
\end{proposition}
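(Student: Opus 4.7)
The plan is to exploit the key structural feature of the vertical segments inner product, namely that the pairs $(y_i, a_i)$ for different indices $i$ appear in completely separate factors of the product, with only the outer variables $s, s'$ shared across $i$. Consequently, both parts will reduce to a single application of a classical inequality (H\"older for (i), Cauchy--Schwarz for (ii)) on the outer $(s, s')$-average, bypassing the iterated Cauchy--Schwarz scheme of standard Gowers-type proofs.

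For part \textbf{(i)}, I would first observe the factorization
\[\langle f_1, \dots, f_r, g_1, \dots, g_r\rangle_{\mathsf{VS}_r(B,B')} = \exx_{s, s' \in B'} \prod_{i \in [r]} h_i(s, s'), \qquad h_i(s, s') := \exx_{\ssk{y \in B\\ a \in B'}} f_i(a, y+s)\, g_i(a, y+s'),\]
which uses only that the $(y_i, a_i)$'s are independent once $(s,s')$ is fixed. Then I would apply H\"older's inequality to the $(s,s')$-expectation with $r$ equal exponents (all $r$), obtaining $\bigl|\exx_{s, s'}\prod_i h_i\bigr|^r \leq \prod_i \exx_{s,s'} |h_i|^r$. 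Since $r$ is even, $|h_i|^r = h_i^r$, and expanding $h_i(s,s')^r$ back into an average over $r$ fresh independent copies of $(y, a)$ identifies $\exx_{s,s'} h_i^r$ exactly with $\langle f_i, \dots, f_i, g_i, \dots, g_i\rangle_{\mathsf{VS}_r(B,B')}$.

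For part \textbf{(ii)}, which must hold for all $r$, I would exploit instead the independence of $s$ and $s'$: with all $y_i, a_i$ held fixed, $s$ couples only to the $f_i$'s and $s'$ only to the $g_i$'s, so
\[\langle f_1, \dots, f_r, g_1, \dots, g_r\rangle_{\mathsf{VS}_r(B,B')} = \exx_{\ssk{y_1, \dots, y_r \in B\\ a_1, \dots, a_r \in B'}} F(y_1,\dots,a_r)\, G(y_1,\dots,a_r),\]
where $F = \exx_{s \in B'} \prod_i f_i(a_i, y_i+s)$ and $G$ is defined analogously from the $g_i$'s. Cauchy--Schwarz in the outer average then yields $|\exx FG|^2 \leq (\exx F^2)(\exx G^2)$, and expanding $F^2$ (respectively $G^2$) by introducing a fresh copy of $s$ (respectively $s'$) identifies the two factors as $\langle f_1, \dots, f_r, f_1, \dots, f_r\rangle_{\mathsf{VS}_r(B,B')}$ and $\langle g_1, \dots, g_r, g_1, \dots, g_r\rangle_{\mathsf{VS}_r(B,B')}$.

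There is no real obstacle here: both inequalities are almost immediate from the factorized structure, which is much friendlier than the general grid norm setting (where the sharing of variables across indices forces a genuine iterated Cauchy--Schwarz argument). The only mild point to watch is in part (i), where removing absolute values inside the H\"older bound uses $r$ even so that $h_i^r \geq 0$; this is also what guarantees the right-hand side of (i) is non-negative, as it must be to bound a non-negative $r$-th power.
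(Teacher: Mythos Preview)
Your proposal is correct and follows essentially the same approach as the paper's proof: for part (i) the paper also factorizes the inner product as $\exx_{s,s'\in B'}\prod_{i\in[r]} F_i(s,s')$ with $F_i(s,s')=\exx_{a\in B',\,y\in B} f_i(a,y+s)g_i(a,y+s')$, applies the generalized H\"older inequality over $(s,s')$, and uses evenness of $r$ to identify $\|F_i\|_{L^r}^r$ with the right-hand side; for part (ii) it likewise separates the $s$- and $s'$-averages and applies Cauchy--Schwarz in the $(y_1,\dots,y_r,a_1,\dots,a_r)$ variables.
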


\begin{proof}\textbf{Inequality \textbf{(i)}.} Recall that $r$ is even. Thus
\[\Big|\langle f_1, \dots, f_r, g_1, \dots, g_r \rangle_{\mathsf{VS}_r(B, B')}\Big| = \Big|\exx_{s, s' \in B'}\prod_{i \in [r]} \Big(\exx_{a \in B', y \in B} f_i(a, y + s) g_i(a, y + s')\Big)\Big|.\]
Let $F_i(s, s') = \ex_{a \in B', y \in B} f_i(a, y + s) g_i(a, y + s')$. By generalized H\"older's inequality, we get
\[\Big|\langle f_1, \dots, f_r, g_1, \dots, g_r \rangle_{\mathsf{VS}_r(B, B')}\Big| = \Big|\exx_{s, s' \in B'} F_1(s,s') \dots F_r(s,s')\Big| \leq \prod_{i \in [r]} \|F_i\|_{L^r(B' \times B')}.\]
Taking $r$\tss{th} power, we get
\[\|F_i\|_{L^r(B' \times B')}^r = \exx_{s, s' \in B'} \Big|\exx_{a \in B', y \in B} f_i(a, y + s) g_i(a, y + s')\Big|^r = \langle f_i, f_i, \dots, f_i, g_i, g_i, \dots, g_i \rangle_{\mathsf{VS}_r(B, B')},\]
using the fact that $r$ is even.\\
\indent \textbf{Inequality \textbf{(ii)}.} This time, we use Cauchy-Schwarz inequality
\begin{align*}&\Big|\langle f_1, \dots, f_r, g_1, \dots, g_r \rangle_{\mathsf{VS}_r(B, B')}\Big|^2 = \Big|\exx_{\ssk{y_1, \dots, y_r \in B\\a_1, \dots, a_r \in B'}} \Big(\exx_{s \in B'}\prod_{i \in [r]} f_i(a_i, y_i + s)\Big)\Big(\exx_{s \in B'}\prod_{i \in [r]} g_i(a_i, y_i + s)\Big)\Big|^2\\
&\hspace{2cm}\leq \Big(\exx_{\ssk{y_1, \dots, y_r \in B\\a_1, \dots, a_r \in B'}} \Big|\exx_{s \in B'}\prod_{i \in [r]} f_i(a_i, y_i + s)\Big|^2\Big) \Big(\exx_{\ssk{y_1, \dots, y_r \in B\\a_1, \dots, a_r \in B'}} \Big|\exx_{s \in B'}\prod_{i \in [r]} g_i(a_i, y_i + s)\Big|^2\Big)\\
&\hspace{2cm}= \langle f_1, \dots, f_r, f_1, \dots, f_r \rangle_{\mathsf{VS}_r(B, B')} \langle g_1, \dots, g_r, g_1, \dots, g_r \rangle_{\mathsf{VS}_r(B, B')}.\qedhere\end{align*}
\end{proof}

In the analysis of various vertical segments Gowers inner products, we shall also make use of the following inequality due to Kelley and Meka.

\begin{lemma}[Kelley and Meka~\cite{KelleyMeka}, Appendix D, proof of Proposition 5.7]\label{binomialIneq} Let $r' \geq r$ be positive integers and let $\varepsilon > 0$. Then, provided $r' \geq 2\varepsilon^{-1} r$, we have
\[\sum_{\ssk{d\text{ even}\\r \leq d \leq r'}} \binom{r'}{d} \varepsilon^d \geq (1 + \varepsilon/2)^{r'}.\]\end{lemma}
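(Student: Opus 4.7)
The plan is to exploit the classical identity
\[
\sum_{d \text{ even}} \binom{r'}{d}\varepsilon^d = \tfrac{1}{2}\bigl((1+\varepsilon)^{r'} + (1-\varepsilon)^{r'}\bigr),
\]
obtained by adding the binomial expansions of $(1\pm\varepsilon)^{r'}$. Writing the desired quantity as this total even sum minus the ``bad'' low-order contribution
\[
\sum_{\substack{d \text{ even}\\ d < r}} \binom{r'}{d}\varepsilon^d,
\]
the problem reduces to an upper bound on this bad tail.

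To obtain that bound, I would interpret the individual terms probabilistically: for $X \sim \mathrm{Binomial}(r', p)$ with $p = \varepsilon/(1+\varepsilon)$, one has $\binom{r'}{d}\varepsilon^d = (1+\varepsilon)^{r'} \Pr[X=d]$. The hypothesis $r' \geq 2\varepsilon^{-1} r$ translates into $\mathbb{E} X = \varepsilon r'/(1+\varepsilon) \geq 2r/(1+\varepsilon)$, placing $r$ strictly below (roughly half of) the mean. A standard Chernoff-type deviation inequality then yields $\Pr[X < r] \leq \exp(-c r)$ for an absolute $c > 0$, and therefore
\[
\sum_{d < r} \binom{r'}{d}\varepsilon^d \leq e^{-c r}(1+\varepsilon)^{r'}.
\]

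To conclude, I would compare against $(1+\varepsilon/2)^{r'}$. The key quantitative observation is that
\[
\frac{(1+\varepsilon)^{r'}}{(1+\varepsilon/2)^{r'}} = \left(1 + \frac{\varepsilon/2}{1+\varepsilon/2}\right)^{r'} \geq \exp\bigl(c' \varepsilon r'\bigr) \geq \exp(2c' r)
\]
for an absolute constant $c' > 0$, so $(1+\varepsilon)^{r'}$ is exponentially (in $r$) larger than the target. This slack should comfortably dominate both the factor of $\tfrac{1}{2}$ coming from restricting to even exponents and the Chernoff error $e^{-cr}(1+\varepsilon)^{r'}$, provided the constants are tracked carefully; note also that $(1-\varepsilon)^{r'} \geq 0$ when $\varepsilon \leq 1$, so dropping it is harmless in the regime that matters.

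The main obstacle, which forces the precise shape of the hypothesis $r' \geq 2\varepsilon^{-1} r$, is to calibrate the tail estimate tightly enough that the loss is strictly absorbed by the exponential gap between $(1+\varepsilon)^{r'}$ and $(1+\varepsilon/2)^{r'}$. One should also separately address the edge case $\varepsilon > 1$, where $(1-\varepsilon)^{r'}$ may be negative; there one can instead argue directly that the binomial summand is peaked near $d \approx \varepsilon r'/(1+\varepsilon) \gg r$, so the sum restricted to even $d \geq r$ already recovers a constant fraction of $(1+\varepsilon)^{r'}$, which in turn dominates $(1+\varepsilon/2)^{r'}$.
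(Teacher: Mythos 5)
The overall shape of your plan is right and probably close to the Kelley--Meka source: the even-part identity
\[
\sum_{d \text{ even}} \binom{r'}{d}\varepsilon^d = \tfrac12\bigl((1+\varepsilon)^{r'}+(1-\varepsilon)^{r'}\bigr)
\]
and a binomial tail estimate for $\sum_{d<r}\binom{r'}{d}\varepsilon^d$ are the correct ingredients. However, the central claim
``$\Pr[X<r]\le e^{-cr}$ for an absolute $c>0$'' is not correct under the stated hypothesis. With $X\sim\mathrm{Binomial}(r',\varepsilon/(1+\varepsilon))$ one has $\mathbb{E}X=\varepsilon r'/(1+\varepsilon)$, and $r'\ge 2\varepsilon^{-1}r$ gives only $r\le\frac{1+\varepsilon}{2}\,\mathbb{E}X$. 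The deviation factor $\frac{1-\varepsilon}{2}$ vanishes as $\varepsilon\to 1$, so the Chernoff exponent degenerates; for example at $\varepsilon=1$, $r'=2r$, we have $p=1/2$, $r=\mathbb{E}X$, and $\Pr[X<r]\to 1/2$, not $e^{-cr}$. So your tail bound fails for $\varepsilon$ near $1$ (and the ratio bound $\log\frac{1+\varepsilon}{1+\varepsilon/2}\ge c'\varepsilon$ with an absolute $c'$ also fails as $\varepsilon\to\infty$, since the left side is bounded by $\log 2$). The supplementary sketch for $\varepsilon>1$ asserts that the binomial mode is $\gg r$, but in fact for $\varepsilon>1$ the hypothesis permits $\mathbb{E}X<r$ (take $\varepsilon=2$, $r'=r$, so $\mathbb{E}X=2r/3$), so that paragraph does not save the argument either.

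Separately, ``comfortably dominates'' is the wrong intuition here: the inequality is essentially tight at the boundary. At $\varepsilon=2$, $r'=r$ (both even) the statement reduces to $\varepsilon^r\ge(1+\varepsilon/2)^r$, which is an \emph{equality}; and at $r=1$, $\varepsilon\to 0$, $r'=2\varepsilon^{-1}$, the two sides approach $\tfrac12(e^2+e^{-2})-1\approx 2.763$ and $e\approx 2.718$, a margin of only a few percent. (In fact at $r=1$, $\varepsilon=1$, $r'=2$ the claimed inequality is $1\ge 9/4$, which is false as written, so the lemma tacitly requires $r$ to be even, as it is in the paper's application.) A correct proof must therefore compare the two sides essentially term by term, rather than rely on generic Chernoff constants with arbitrary slack; for instance, one can exploit the monotonicity of $d\mapsto\binom{r'}{d}\varepsilon^d$ for $d\le r$ (which follows from $r\le\varepsilon r'/(1+\varepsilon)$) and bound each low-order term directly against the corresponding terms of $(1+\varepsilon/2)^{r'}$, keeping careful track of the constants.
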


\section{Skew corner-free sets}

Recall that structural hierarchy in the problem of finding skew corners had the form $A \subseteq X \times (t + B) \subseteq (s + B) \times (t + B) \subseteq \mathbb{Z}/N\mathbb{Z} \times \mathbb{Z}/N\mathbb{Z}$ for a Bohr set $B$.\footnote{In the introduction, we discussed the finite vector space case so we had subgroups instead of Bohr sets.} Our argument is based on the density increment argument, where we increase the relative density of $A$ inside $X \times B$ at each step. This comes at the cost of slightly decreasing the relative density of $X$ inside $s + B$ and decreasing the radius of $B$ and increasing its rank somewhat. However, these costs are small enough to allow us to obtain the Behrend-type final bounds. The following proposition articulates the density increment step and is the essence of the argument.

\begin{proposition}\label{densincprop}There exists an absolute constant $C \geq 1$ for which the following holds. Let $B \subseteq \mathbb{Z}/N\mathbb{Z}$ be a regular Bohr set of rank $d$ and radius $\rho$ and let $X \subseteq B$ be a set of relative density $\delta$. Let $A \subseteq X \times B$ be a set of relative density $\alpha$. Suppose that 
\begin{equation}|B_{\mu_0}| \geq 9 \alpha^{-2}\delta^{-2}\label{smallnesscond}\end{equation}
for $\mu_0 = \Big(\frac{\alpha \delta}{2d}\Big)^{C \log^2 2 \delta^{-1}}$.\\
\indent Suppose that $A$ contains no non-trivial skew corners. Then there exist a further regular Bohr set $B' \subseteq B$ and a positive quantity
\[d' \leq C\log^{C}(2\alpha^{-1})\log^{C}(2\delta^{-1})\]
such that $B'$ has rank at most $d + d'$ and radius at least $\rho \exp(-d')/d^3$ and for some translates $x_0 + B'$ and $y_0 + B'$ we have
\[|A \cap ((X \cap (x_0 + B')) \times (y_0 + B'))| \geq (1 + C^{-1}) \alpha |X \cap (x_0 + B')|\,|y_0 + B'|\]
and
\[|X \cap (x_0 + B')| \geq \frac{1}{C}\alpha^C \delta |B'|.\]
\end{proposition}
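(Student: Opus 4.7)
I would follow the six-step Kelley-Meka strategy outlined in the introduction, using the vertical segments norms $\|\cdot\|_{\mathsf{VS}_r}$ in place of $\|\cdot\|_{\mathsf{U}^{2,r}_{\mathrm{KM}}}$ and Gowers-H\"older (Proposition~\ref{gowholdIneq}) in place of Gowers-Cauchy-Schwarz, with all affine shifts routed through Lemma~\ref{chVarLemma}. The first step is to \emph{regularize} $A$: using Corollary~\ref{bohraverage} and averaging, I pass to a translate $x_1+\tilde B$ of a narrow dilate $\tilde B$ of $B$ in which a large subset $X'\subseteq X$ has density $\approx\delta$ and, simultaneously, a positive fraction of columns $A_x$ ($x\in X'$) have density $\Theta(\alpha)$ in a slightly shrunken dilate $B^{(2)}$ of $B$. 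In this regularized setting the structural prediction for the skew-corner count is $\Theta(\alpha^3\delta^2)$, whereas the no-skew-corner hypothesis combined with~\eqref{smallnesscond} forces the trivial $a=0$ contribution to be much smaller than this. Decomposing $\id_A=\alpha\,\id_{X'}\id_{B^{(2)}}+\bal$ and expanding then produces a mixed term of absolute value $\gtrsim\alpha^3\delta^2$, and Proposition~\ref{gowholdIneq} upgrades this into the \emph{imbalance} bound $\|\bal\|_{\mathsf{VS}_r(B^{(2)},B_\mu)}\geq\Omega(\alpha\,\delta^{o(1)})$ for a suitable even $r=O(\log(2\delta^{-1}))$.

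The step I expect to be the main obstacle is \emph{passing from $\bal$ to $\id_A$}, since the Kelley-Meka spectral-positivity trick is unavailable for the vertical segments norm. I would adapt the approach of Filmus-Hatami-Hosseini-Kelman: expand $\|\bal\|_{\mathsf{VS}_r}^{2r}$ binomially through $\id_A=\alpha\,\id_{X'}\id_{B^{(2)}}+\bal$, bound each cross term via Proposition~\ref{gowholdIneq}(ii) by a product of $\|\bal\|_{\mathsf{VS}_r}$ and purely structured counts, and then assemble these inequalities using Lemma~\ref{binomialIneq} to conclude
\[\|\id_A\|_{\mathsf{VS}_r(B^{(2)},B_\mu)}^{2r}\geq(1+\Omega(1))\,\alpha^{2r}\,\exx_{x\in x_1+\tilde B,\,a\in B_\mu}\id_{X'}(x)\id_{X'}(x+a).\]
The delicate point is to match the cross-term bounds to the combinatorial weights in Lemma~\ref{binomialIneq} tightly enough that the $(1+\Omega(1))$ gain survives.

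The remaining three steps are in the familiar spirit of Kelley and Meka. Expanding the left-hand side above as a correlation between columns of $A$ and applying a dependent-random-choice \emph{sifting} argument, I extract a difference set $S$ along which many pairs of columns of $A$ are highly correlated, together with a column-slice $A'\subseteq\{x^\ast\}\times B^{(2)}$ of relative density $\Omega(\alpha)$ for which a triangle-style count among $S$ and $A'$ exceeds its random prediction by a factor $1+\Omega(1)$. \emph{Almost-periodicity} (Theorem~\ref{apthm}) then smooths the role of $S$ over a regular Bohr set $B'$ of rank $d+d'$ with $d'\leq O(\log^{O(1)}(2\alpha^{-1})\log^{O(1)}(2\delta^{-1}))$, and pigeonholing over translates $x_0+B'$ and $y_0+B'$ yields both the density increment for $A$ and the lower bound on $|X\cap(x_0+B')|$ inherited from the density of $X'$ along the pigeonholing. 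All intermediate bookkeeping of ranks, radii and the parameter $\mu$ is routine given~\eqref{smallnesscond} and Lemma~\ref{basicBohr}.
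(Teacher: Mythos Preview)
Your six-step outline matches the paper's architecture, and Steps 4--6 are described accurately enough. But two technical choices in Steps 1--3 differ from the paper in ways that matter.

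First, the balanced function. You decompose $\id_A = \alpha\,\id_{X'}\id_{B^{(2)}} + \bal$ globally; the paper instead sets $\bal(x,y) = \id_A(x,y) - \vd(x)$ where $\vd(x) = \ex_{y\in B}\id_A(x,y)$ is the \emph{actual} density of the column at $x$. This column-wise balancing is what makes the paper's Step 3 work: when one expands $\|\id_A\|_{\mathsf{VS}_{r'}}^{2r'} = \langle \bal + \vd,\dots,\bal+\vd\rangle$ into $4^{r'}$ terms, every term in which some pair $(f_i,g_i)$ is mismatched (one $\bal$, one $\vd$) is $O(d\mu)$, because $\ex_{y\in B}\bal(x,y+s)\approx 0$ by regularity. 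What survives is a single diagonal sum $\sum_k\binom{r'}{k}\Pi_k$ with $\Pi_k\geq 0$, to which Lemma~\ref{binomialIneq} applies directly. With your global balancing, $\ex_y\bal(x,y) = \vd(x)-\alpha\,\id_{X'}(x)$ is \emph{not} small, so the mismatched cross terms do not vanish and you cannot reduce to a diagonal sum; your sketch of ``bound each cross term via Proposition~\ref{gowholdIneq}(ii) and assemble via Lemma~\ref{binomialIneq}'' does not explain how to get the required signs and weights to line up. This is exactly the ``delicate point'' you flag, and the paper's choice of $\bal$ is what resolves it.

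Second, the regularization in Step 1 has two parts in the paper, and you only describe one. Besides passing to columns of density $\Theta(\alpha)$, the paper runs an inner density-increment iteration on $X$ itself (at cost $\kappa^{O(\log\delta^{-1})}$ in the dilate scale) to ensure that $X$ has \emph{no} density increment by a factor $1+c_2$ on any translate of any further dilate down to scale $\kappa\lambda$. This ``no-increment'' property of $X$ is used repeatedly in Claim~\ref{startingBounds} to control expressions like $\ex_{a\in B_\mu}\id_X(x+a)$ uniformly in $x$, which is what allows the skew-corner count to be compared to $\alpha^2\delta^2$ with the right constants. Without it, the lower bounds in Step 2 do not close. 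Your description via Corollary~\ref{bohraverage} alone does not give this uniform control.
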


Let us conclude the main result from the proposition above.

\begin{proof}[Proof of Theorem~\ref{mainthm}] Let $A \subseteq [N] \times [N]$ be a set of density $\alpha$ without skew corners. We need to pass to the cyclic group setting. Without loss of generality, $3| N$. Consider $A \cap [i N/3, (i + 1) N/3) \times [j N/3, (j + 1) N/3)$ for $i, j \in \{0, 1,2\}$. There exists a choice of such a set $A'$ which has relative density at least $\frac{1}{9} \alpha$ inside $[N] \times [N]$. Viewing $A'$ as a subset of $\mathbb{Z}/N\mathbb{Z} \times \mathbb{Z}/N\mathbb{Z}$ instead, $A'$ remains a skew corner-free set.\\

Having passed to the $\mathbb{Z}/N\mathbb{Z} \times \mathbb{Z}/N\mathbb{Z}$ setting, we are ready to perform the density increment argument. Let $C$ be the absolute constant in Proposition~\ref{densincprop}. At $i$\tss{th} step of the iteration, we shall have a regular Bohr set $B^{(i)}$ of rank $d_i$ and radius $\rho_i$ and a set $X^{(i)} \subseteq B^{(i)}$ of relative density $\delta_i$ such that $A'$ has relative density $(1 + C^{-1})^i \alpha$ inside $(x^{(i)} + X^{(i)}) \times (y^{(i)} + B^{(i)})$ for some elements $x^{(i)}, y^{(i)} \in \mathbb{Z}/N\mathbb{Z}$. The following bounds will hold at each step:
\begin{itemize}
\item $\delta_i \geq \alpha^{Ci}C^{-i}$,
\item $d_i \leq C^{C+1} (i + 1)^{C+1} \log^{2C}(2C\alpha^{-1})$,
\item $\rho_i \geq \frac{\exp\big(-C^{C+1} (i + 1)^{C+1} \log^{2C}(2C\alpha^{-1})\big)}{\big(C^{C+1} (i + 1)^{C+1} \log^{2C} (2C\alpha^{-1})\big)^{3i}}$.
\end{itemize}
Initially, we set $X^{(0)} = B^{(0)} = \mathbb{Z}/N\mathbb{Z}$, $d_0 = 0, \rho_0 = 1, \delta_0 = 1$. At each step of procedure we apply Proposition~\ref{densincprop}, as long as the relative density of $A'$ inside the relevant product is at most 1 and condition~\eqref{smallnesscond} holds. It is easy to see that the claimed bounds on $\delta_i, d_i, \rho_i$ hold at each step. Owing to density of $A'$, the procedure terminates after at most $O(\log 2\alpha^{-1})$ steps, and since $A'$ is skew corner-free, it must be the case that condition~\eqref{smallnesscond} fails. Assume that $i$\tss{th} step was the final one. Lemma~\ref{basicBohr} implies the bound
\begin{equation}(\rho_i \mu/2\pi)^{d_i} N \leq 9 \alpha_i^{-2}\delta_i^{-2},\label{finalcond}\end{equation}
for $\mu = (\alpha_i \delta_i/2d_i)^{C \log^2(2\delta_i^{-1})}$. The bounds above allow us to find a $K = O(\log^{O(1)}(2\alpha^{-1}))$ such that 
\[\delta_i^{-1} \leq \exp(-K),\,\, d_i \leq K,\,\, \rho_i^{-1} \leq \exp(-K).\]
These bounds, combined with~\eqref{finalcond}, give
\[N \leq \exp\Big((2K)^{O(1)}\Big) \leq \exp\Big(O(\log^{O(1)}(2\alpha^{-1})\Big)\]
completing the proof.\end{proof}

The remained of the paper is devoted to the proof of Proposition~\ref{densincprop}. Before we begin, let us record here some of the various parameters that will play a role in the proof. We shall use:
\begin{itemize}
    \item constants $c_1 = 2^{-13}, c_2 = 2^{-14}$ and $c_3 = 2^{-12}$,
    \item even positive integers $r$ and $r'$, which are roughly $\log 2\delta^{-1}$,
    \item parameters $\lambda, \mu, \nu$, used as dilate scales of the given Bohr set $B$, which will satisfy
    \[\lambda \approx \Big(\frac{\alpha^4\delta}{32}\Big)^{r^2},\,\, \mu \approx \Big(\frac{\alpha^4\delta}{32}\Big)^{64r} \lambda,\,\, \nu \approx \Big(\frac{\alpha^4 \delta}{8}\Big)^{r'} \mu.\]
\end{itemize}
We shall explain their roles during the proof, but we opted to mention them in advance to help the reader with tracking the quantitative dependencies.

\begin{proof}[Proof of Proposition~\ref{densincprop}] \textbf{Step 1. Regularization of the set $A$.} We first perform a simple regularization step in which we ensure that all columns $\{y \in B \colon (x,y) \in A\}$ of $A$ have roughly the same density in $B$ and also that $X$ has no significant density increments in further Bohr subsets of $B$. The latter property will be used at a few places in \textbf{Step 2}.\\

Fix $c_1 = 2^{-13}$. Let us partition $X = X_1 \cup X_2 \cup X_3$ into three sets according to sizes of the columns indexed by their elements. Namely, let $X_1$ consist of those $x \in X$ such that $|\{y \in B \colon (x,y) \in A\}| \geq (1 + c^2_1) \alpha |B|$, let $X_2$ consist of those $x \in X$ such that $|\{y \in B \colon (x,y) \in A\}| \in [(1-c_1)\alpha|B|, (1 + c^2_1) \alpha |B|)$, and let $X_3 = X \setminus (X_1 \cup X_2)$.\\
\indent If it happens that $|X_1| \geq c^2_1 \alpha |X|$, we may consider $A \cap (X_1 \times B)$, which now has relative density at least $(1 + c_1^2) \alpha$ inside $X_1 \times B$, completing the proof.\\
\indent Now assume the contrary. We then have
\begin{align*}&\sum_{x \in X_2} |\{y \in B \colon (x,y) \in A\}| + \sum_{x \in X_3}|\{y \in B \colon (x,y) \in A\}|\\
&\hspace{2cm}\geq \alpha |X||B| - \sum_{x_1 \in X} |\{y \in B \colon (x,y) \in A\}| \geq \alpha |X| |B| - c^2_1 \alpha |X| |B|.\end{align*}
Subtract $(1 - c_1)\alpha |X_2 \cup X_3| |B|$ from both sides to obtain
\begin{align*} &\sum_{x \in X_2} \Big(|\{y \in B \colon (x,y) \in A\}| - (1 - c_1)\alpha |B|\Big) + \sum_{x \in X_3}\Big(|\{y \in B \colon (x,y) \in A\}| -(1 - c_1)\alpha |B|\Big)\\
&\hspace{2cm}\geq (1 - c_1^2) \alpha |X| |B| - (1 - c_1)\alpha |X_2 \cup X_3||B| \\
&\hspace{2cm}\geq (1 - c_1^2) \alpha |X| |B| - (1 - c_1)\alpha |X||B|\,\, =\,\, (c_1 - c_1^2) \alpha |X||B|.\end{align*}
However, the value of $|\{y \in B \colon (x,y) \in A\}| - (1 - c_1)\alpha |B|$ is at most $(c_1 + c_1^2)\alpha |B|$ when $x \in X_2$ and at most $0$ when $x \in X_3$, from which we deduce $(c_1 + c_1^2)\alpha |B| |X_2| \geq (c_1 - c_1^2) \alpha |X||B|$, implying that $|X_2| \geq \frac{c_1 - c_1^2}{c_1 + c_1^2} |X|$. In particular, $X_2$ has relative density at least $\delta/2$ inside $B$.\\

Let $c_2 = 2^{-14}$,
\begin{equation}r = 2\lceil\log_2(2\delta^{-1})\rceil + 2^{30}\label{rdefn}\end{equation}
(note that $r$ is an even integer) and 
\[\kappa = \frac{1}{2^{20} r^2 d} \Big(\frac{\alpha^4\delta}{32}\Big)^{64r}.\]
Next, we pass to a translate of a dilate of $B$ on which $X_2$ has no density increments by a factor of $1 + c_2$ with respect to further translates of dilates scaled by a factor of at most $\kappa$. Set initially $t_0 = 0$ and $\lambda_0 = 1$. Assume that at $i$\tss{th} step we have translate of a dilate $t_i + B_{\lambda_i}$ such that $B_{\lambda_i}$ is regular, $\lambda_i \geq \kappa^i$ and $|X_2 \cap (t_i + B_{\lambda_i})| \geq (1 + c_2)^i |X_2|/|B|$. Either this translate of a dilate satisfies the required conditions, or we may find further translate of a dilate $t_{i + 1} + B_{\lambda_{i + 1}}$ such that $B_{\lambda_{i + 1}}$ is regular and
\[\frac{|X_2 \cap (t_{i + 1} + B_{\lambda_{i+1}})|}{|B_{\lambda_{i+1}}|} \geq (1 + c_2)\frac{|X_2 \cap (t_i + B_{\lambda_i})|}{|B_{\lambda_i}|}.\]

This procedure terminates after at most $O(\log 2\delta^{-1})$ steps, producing desired translate of a dilate $t + B_\lambda$. Hence, $\lambda \geq \kappa^{O(\log2\delta^{-1})}$ and, as we the relative density of $X_2$ increases at each step, we may assume that $X_2 \cap (t + B_{\lambda})$ has relative density $\delta' \geq \frac{\delta}{2}$ inside $t + B_{\lambda}$. Finally, the relative density inside any $t' + B_{\mu}$ is at most $(1 + c_2) \delta'$ for any $t'$ and $\mu \geq \kappa \lambda$, provided $B_{\mu}$ is regular.\\

We may shift $A$ by $(-t, 0)$, allowing us to assume without loss of generality that $t = 0$. Let us now misuse the notation and write $X$ for the intersection $X_2 \cap B_{\lambda}$, use $\delta$ for density of $X$ relative to $B_\lambda$, which does not affect the form of the final bounds. Hence, we have $A \subseteq X \times B \subseteq B_\lambda \times B$.\\

\noindent\textbf{Step 2. Finding imbalance in $A$.} The goal of this step is to show that $A$ is not quasirandom. Formally, we show that the vertical segments norm of the (suitably defined) balanced function of $A$ is large. Namely, let us write $\vd(x) = \ex_{y \in B} \id_A(x,y)$, which is the relative density of the column of $A$ with coordinate $x$ inside $B$ (and is 0 whenever $x \notin X$), and let us define the \emph{balanced function} of $A$ as $\bal(x,y) = \id_A(x,y) - \vd(x)$. Let us note obvious bounds $\|\vd\|_\infty \leq 1$, since $\vd(x)$ is average of an indicator function, and $\|\bal\|_\infty \leq 1$, since $\bal(x,y) = 1$ or $\bal(x,y) = 1 - a$ for some $a \in [0,1]$.\\
\indent Before exploiting the lack of skew corners, we first need to show some basic bounds concerning simpler configurations. Let $\mu \in [\kappa \lambda, 2\kappa \lambda]$ be a parameter such that $B_\mu$ is regular.

\begin{claim}\label{startingBounds}We have
\[\exx_{\ssk{x \in B_\lambda\\a,s,t \in B_\mu}} \id_X(x + s - t) \vd(x + a)^2 \leq \frac{10}{9} \alpha^2\delta^2\]
and
\begin{align*}\exx_{\ssk{x \in B_\lambda, y \in B\\a,s,t \in B_\mu}} \id_X(x + s - t) \id_A(x + a, y + s)\vd(x + a) \geq &\frac{8}{9} \alpha^2 \delta^2\\
\exx_{\ssk{x \in B_\lambda, y \in B\\a,s,t \in B_\mu}} \id_X(x + s - t) \id_A(x + a, y + a + t)\vd(x + a)\geq &\frac{8}{9} \alpha^2 \delta^2.\end{align*}\end{claim}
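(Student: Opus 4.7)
The plan is to reduce all three inequalities to the single estimate
\[Q \;:=\; \exx_{\ssk{x \in B_\lambda\\ a,s,t \in B_\mu}} \id_X(x+s-t)\,\id_X(x+a) \;=\; (1 \pm O(c_2))\,\delta^2.\]
The first (upper-bound) inequality then follows from $\vd(z)^2 \leq (1+c_1^2)^2\alpha^2\id_X(z)$, which holds because the regularization in Step~1 gives $\vd(z) \leq (1+c_1^2)\alpha$ on $X$ and $\vd(z) = 0$ off $X$. For the second and third inequalities, I would first apply Lemma~\ref{chVarLemma} to the $y$-variable, shifting $y \mapsto y - s$ in the second and $y \mapsto y - a - t$ in the third (with $s, a, t \in B_\mu \subseteq (B)_\mu$ as dummies), turning both integrands into
\[\id_X(x+s-t)\,\id_A(x+a,y)\,\vd(x+a)\]
up to an additive error $O(\mu d)$. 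Expanding $\id_A(x+a,y) = \vd(x+a) + \bal(x+a,y)$ and using $\exx_{y \in B}\bal(x+a,y) = 0$ kills the $\bal$-term, reducing both lower bounds to $\exx \id_X(x+s-t)\vd(x+a)^2 \geq \tfrac{8}{9}\alpha^2\delta^2$. Then $\vd(x+a)^2 \geq (1-c_1)^2\alpha^2\id_X(x+a)$ brings the task back to $Q \geq (1-o(1))\delta^2$.

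For $Q$ itself, the idea is to organise the two factors as local densities
\[F(x) = \exx_{a\in B_\mu}\id_X(x+a), \qquad G(x) = \exx_{s,t\in B_\mu}\id_X(x+s-t),\]
so that $Q = \exx_{x \in B_\lambda} F(x)G(x)$. The pointwise bound $F(x) \leq (1+c_2)\delta$ is the no-density-increment property of $X$ from Step~1 directly; for $G$, fixing $s$ first and using the symmetry $B_\mu = -B_\mu$ gives $\exx_t \id_X((x+s) - t) = F(x+s)$, so $G(x) \leq (1+c_2)\delta$ as well. Applying Lemma~\ref{chVarLemma} to shift $x \mapsto x - a$ (resp.\ $x \mapsto x - s + t$), treating $a, s, t \in B_\mu = (B_\lambda)_{\mu/\lambda}$ as dummies, shows $\exx_x F(x) = \exx_x G(x) = \delta + O(\mu d/\lambda)$. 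Setting $A = (1+c_2)\delta$, the pointwise inequality $(A - F(x))(A - G(x)) \geq 0$ rearranges to $F(x)G(x) \geq A(F(x)+G(x)) - A^2$, and taking expectations yields
\[Q \geq 2A\delta(1 - o(1)) - A^2 = (1+c_2)(1 - c_2 - o(1))\delta^2 \geq (1 - 2^{-12})\delta^2,\]
while the matching upper bound $Q \leq A \exx_x G(x) \leq (1+c_2)(1+o(1))\delta^2$ is immediate.

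The main technical obstacle is this lower bound on $Q$: upper-density information on $X$ alone does not lower-bound individual correlations $\exx_x \id_X(x)\id_X(x+w)$ for a single fixed shift $w$, so a straight Fubini-and-convolution approach breaks down. The pointwise identity $(A-F)(A-G) \geq 0$ is what converts two local upper bounds (plus the means of $F$ and $G$, supplied for free by Lemma~\ref{chVarLemma}) into a usable global lower bound on the product $FG$, and this is clean only because $c_2$ was chosen tiny in Step~1. All change-of-variable errors collected along the way are of order $\mu d/\lambda$ and $\mu d$, and by $\mu \leq 2\kappa\lambda$ with $\kappa$ of order $(\alpha^4\delta/32)^{64r}/(2^{20}r^2 d)$ these are exponentially smaller than $\alpha^2\delta^2$, comfortably absorbed into the slack between $1$ and the constants $8/9$ and $10/9$.
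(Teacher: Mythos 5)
Your reduction of all three bounds to the quantity $Q = \ex_{x\in B_\lambda,\,a,s,t\in B_\mu}\id_X(x+s-t)\id_X(x+a)$ matches the paper exactly, and the bookkeeping for the $y$-shifts, the identity $\ex_{y\in B}\bal(x+a,y)=0$, and the two-sided elementary bounds $\vd \lessgtr (1\pm c_1)\alpha\id_X$ are all as in the paper's argument. Where you part ways is the lower bound on $Q$. The paper writes $Q-\delta^2 = \ex_x G(x)(F(x)-\delta) + \delta(\ex_x G(x)-\delta)$, applies Cauchy--Schwarz to the first term, and then needs an $L^2$ bound on $G$ together with a variance estimate $\ex_x|F(x)-\delta|^2 \leq 2c_2\delta^2$ (itself extracted from the no-density-increment property plus a further change of variables), arriving at $|Q-\delta^2|\leq 3\sqrt{c_2}\,\delta^2$. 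You instead observe that the regularization gives the pointwise bounds $F(x),G(x)\leq A:=(1+c_2)\delta$ directly (using symmetry of $B_\mu$ to see $G(x)=\ex_s F(x+s)$), so $(A-F)(A-G)\geq 0$ rearranges to $FG\geq A(F+G)-A^2$, and averaging over $x$ with $\ex F, \ex G = \delta + O(d\mu/\lambda)$ gives $Q\geq (1-c_2^2)\delta^2 - O(\delta d\mu/\lambda)$. This avoids both Cauchy--Schwarz and the variance computation, is shorter, and in fact gives a sharper numerical saving ($c_2^2$ rather than $3\sqrt{c_2}$), which is harmless slack here but a genuine simplification. The buy on the paper's side is that the variance lemma it proves en route is not used elsewhere, so nothing is lost by your replacement. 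Your appraisal that this pointwise trick is the crux, and that all change-of-variables errors are of order $d\mu/\lambda$ or $d\mu$ and absorbed by the astronomically small $\kappa$, is accurate.
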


\begin{proof}To obtain the first bound, observe that $\vd(x) \leq (1 + c_1^2) \alpha \id_X(x)$ for all $x$, so we have
\[\exx_{\ssk{x \in B_\lambda\\a,s,t \in B_\mu}} \id_X(x + s - t) \vd(x + a)^2 \leq (1 + c_1^2)^2 \alpha^2 \exx_{\ssk{x \in B_\lambda\\a,s,t \in B_\mu}} \id_X(x + s - t)\id_X(x + a).\]
Furthermore, we know that $\ex_{a \in B_\mu} \id_X(x + a) \leq (1 + c_2) \delta \leq \frac{101}{100}\delta$ for all $x$, as otherwise $X$ has a significant density increment on translate of $B_\mu$. On the other hand, since $\mu \leq \frac{\delta}{2^{15} d} \lambda$, the change of variables lemma (Lemma~\ref{chVarLemma}) for transformation $x \mapsto x + s - t$ implies that 
\[\Big|\exx_{\ssk{x \in B_\lambda\\s,t \in B_\mu}} \id_X(x + s - t) - \exx_{\ssk{x \in B_\lambda\\s,t \in B_\mu}} \id_X(x)\Big| \leq 100 d \frac{\mu}{\lambda} \leq \frac{1}{100} \delta\]
and we know that $\ex_{\ssk{x \in B_\lambda\\s,t \in B_\mu}} \id_X(x) = \delta$. The bound follows from these estimates.\\
\indent When it comes to the second bound, the change of variables lemma (Lemma~\ref{chVarLemma}) for transformations $y \mapsto y + s$ and for $y \mapsto y + a + t$, implies that 
\begin{equation}\Big|\exx_{\ssk{x \in B_\lambda, y \in B\\a,s,t \in B_\mu}} \id_X(x + s - t) \id_A(x + a, y + s)\vd(x + a) - \exx_{\ssk{x \in B_\lambda, y \in B\\a,s,t \in B_\mu}} \id_X(x + s - t) \id_A(x + a, y)\vd(x + a)\Big| \leq 50 d \mu\label{basicconfigclaimeq1}\end{equation}
and
\begin{equation}\Big|\exx_{\ssk{x \in B_\lambda, y \in B\\a,s,t \in B_\mu}} \id_X(x + s - t) \id_A(x + a, y + a + t)\vd(x + a) - \exx_{\ssk{x \in B_\lambda, y \in B\\a,s,t \in B_\mu}} \id_X(x + s - t) \id_A(x + a, y)\vd(x + a)\Big| \leq 100 d \mu.\label{basicconfigclaimeq2}\end{equation}
On the other hand,
\begin{align}\exx_{\ssk{x \in B_\lambda, y \in B\\a,s,t \in B_\mu}} \id_X(x + s - t) \id_A(x + a, y)\vd(x + a) = &\exx_{\ssk{x \in B_\lambda\\a,s,t \in B_\mu}} \id_X(x + s - t) \vd(x + a)^2\nonumber\\
\geq &(1 - c_1)^2 \alpha^2 \exx_{\ssk{x \in B_\lambda\\a,s,t \in B_\mu}} \id_X(x + s - t) \id_X(x + a).\label{claim1bound2}\end{align}
Recalling that $X$ has no significant density increments on translates of $B_\mu$, we get bounds
\[\exx_{x \in B_\lambda} \Big|\exx_{s,t \in B_\mu}\id_X(x + s - t)\Big|^2 \leq \exx_{x \in B_\lambda} \Big|\exx_{s \in B_\mu}\Big(\exx_{t \in B_\mu} \id_X(x + s - t)\Big)\Big|^2 \leq (1 + c_2)^2 \delta^2\]
and $\ex_{a \in B_\mu} \id_X(x + a) \leq (1 + c_2) \delta$. By the change of variables lemma (Lemma~\ref{chVarLemma}) for transformation $x \mapsto x + a$, we have 
\[\Big|\exx_{x \in B_\lambda, a \in B_\mu} \id_X(x + a) - \delta \Big| \leq 50 d \frac{\mu}{\lambda}.\]
Hence,
\begin{align*}\exx_{x \in B_\lambda} \Big|\exx_{a \in B_\mu}\id_X(x + a) - \delta\Big|^2 = &\exx_{\ssk{x \in B_\lambda\\a,a' \in B_\mu}} \id_X(x + a) \id_X(x + a') - 2 \delta\exx_{\ssk{x \in B_\lambda\\a,a' \in B_\mu}} \id_X(x + a) + \delta^2 \\
\leq &\exx_{x \in B_\lambda, a \in B_\mu} \id_X(x + a) \Big(\exx_{a' \in B_\mu} \id_X(x + a')\Big) - \delta^2 + 100 d \frac{\mu}{\lambda}\\
\leq &(1 + c_2) \delta\exx_{x \in B_\lambda, a \in B_\mu} \id_X(x + a)  - \delta^2  + 100 d \frac{\mu}{\lambda}\\
\leq &c_2\delta^2  + 200 d \frac{\mu}{\lambda}\leq 2c_2 \delta^2,\end{align*}
using $\mu \leq c_2 \delta^2 \lambda / (200 d)$. By triangle and Cauchy-Schwarz inequalities, we get
\begin{align*}&\Big|\exx_{\ssk{x \in B_\lambda\\a,s,t \in B_\mu}} \id_X(x + s - t) \id_X(x + a) - \delta^2\Big| \leq \Big|\exx_{x \in B_\lambda} \Big(\exx_{s,t \in B_\mu} \id_X(x + s - t)\Big)\Big(\exx_{a \in B_\mu} \id_X(x + a) - \delta\Big)\Big|\\
&\hspace{8cm}+ \delta \Big|\exx_{x \in B_\lambda, s,t \in B_\mu} \id_X(x + s - t)- \delta\Big| \\
&\hspace{2cm}\leq \sqrt{\exx_{x \in B_\lambda} \Big|\exx_{s,t \in B_\mu} \id_X(x + s - t)\Big|^2} \sqrt{\exx_{x \in B_\lambda} \Big|\exx_{a \in B_\mu} \id_X(x + a) - \delta\Big|^2} + 100 d \frac{\mu}{\lambda} \leq 3\sqrt{c_2} \delta^2.\end{align*}
Returning to~\eqref{claim1bound2}, we deduce that
\[\exx_{\ssk{x \in B_\lambda, y \in B\\a,s,t \in B_\mu}} \id_X(x + s - t) \id_A(x + a, y)\vd(x + a)  \geq (1 - c_1)^2(1 - 3\sqrt{c_2})\alpha^2\delta^2 \geq \frac{19}{20}\alpha^2\delta^2.\]
Since $\mu \leq \frac{\alpha^2 \delta^2}{2000 d}$, the claim follows from inequalities~\eqref{basicconfigclaimeq1} and~\eqref{basicconfigclaimeq2}.\end{proof}

\indent Since there are no non-trivial skew corners in $A$, using a slightly artificial parametrization (in order to reduce the number of changes of variables) of skew corners $(x + s - t, y'), (x + a, y + s), (x + a, y + a + t)$, we have
\begin{align*}\exx_{\ssk{x \in B_\lambda, y \in B\\a,s,t \in B_{\mu}}} \id_X(x + s - t) \id_A(x + a, y + s) \id_A(x + a, y + a + t) = &\exx_{\ssk{x \in B_\lambda, y \in B\\a,s,t \in B_{\mu}}}  \id_A(x + a, y + s) \id(a + t - s = 0)\\
&\hspace{2cm} \leq   \exx_{a,s,t \in B_{\mu}} \id(a + t = s) \leq \frac{1}{|B_\mu|}.\end{align*}

Using Claim~\ref{startingBounds}, as long as $|B_\mu| \geq 9\alpha^{-2}\delta^{-2}$, we get
\begin{align*}&\Big|\exx_{\ssk{x \in B_\lambda, y \in B\\a,s,t \in B_\mu}} \id_X(x + s - t) \bal(x + a, y + s) \bal(x + a, y + a + t)\Big|\\
= &\Big| \exx_{\ssk{x \in B_\lambda, y \in B\\a,s,t \in B_\mu}} \id_X(x + s - t) \id_A(x + a, y + s) \id_A(x + a, y + a + t)\\
&\hspace{2cm}- \exx_{\ssk{x \in B_\lambda, y \in B\\a,s,t \in B_\mu}} \id_X(x + s - t) \id_A(x + a, y + s) \vd(x + a)\\
&\hspace{2cm} -\exx_{\ssk{x \in B_\lambda, y \in B\\a,s,t \in B_\mu}} \id_X(x + s - t) \vd(x + a) \id_A(x + a, y + a + t)\\
&\hspace{2cm} + \exx_{\ssk{x \in B_\lambda, y \in B\\a,s,t \in B_\mu}} \id_X(x + s - t) \vd(x + a)^2\Big|\\
\geq & \frac{1}{2} \alpha^2 \delta^2.\end{align*}

Recall that $r = 2\lceil\log_2(2\delta^{-1})\rceil + 2^{30}$, defined in~\eqref{rdefn}, is an even positive integer. Let $\chi \colon B_\lambda \times B_\mu \times B_\mu \to \mathbb{R}$ be given by $\chi(x,s,t) = \id_X(x +s - t)$. H\"{o}lder's inequality for norms $L^{r/(r-1)}$ and $L^r$ on $B_\lambda \times B_\mu \times B_\mu$ gives 
\[\frac{1}{2} \alpha^2 \delta^2 \leq \Big\|\chi\Big\|_{L^{\frac{r}{r-1}}(B_\lambda \times B_\mu \times B_\mu)} \Big(\exx_{\ssk{x \in B_\lambda\\s,t \in B_\mu}} \Big|\exx_{y \in B, a \in B_\mu} \bal(x + a, y + s) \bal(x + a, y + a + t)\Big|^r\Big)^{1/r}.\]
Log-convexity of $L^p$ norms implies $\|\chi\|_{L^{\frac{r}{r-1}}} \leq \|\chi\|_{L^1} \Big(\frac{\|\chi\|_{L^2}}{\|\chi\|_{L^1}}\Big)^{\frac{2}{r}}$. Furthermore, since $\id_X$ only takes values 0 and 1, we have $\|\chi\|_{L^2} = \|\chi\|_{L^1}^{1/2}$. Additionally, change of variables lemma (Lemma~\ref{chVarLemma}) gives $\delta/2 \leq \|\chi\|_{L^1} \leq 2\delta$, since $\mu \leq \frac{\delta}{200d}\lambda$. These facts together imply $\|\chi\|_{L^{\frac{r}{r-1}}} \leq 4\delta$, since $r \geq \log_2(2\delta^{-1})$. After expanding, using the fact that $r$ is even, we obtain

\[\exx_{x \in B_\lambda} \Big(\exx_{\ssk{y_1, \dots, y_r \in B\\s, t, a_1, \dots, a_r \in B_\mu}} \prod_{i \in [r]} \bal(x + a_i, y_i + s) \prod_{i \in [r]} \bal(x + a_i, y_i + a_i + t)\Big) \geq \Big(\frac{\alpha^2 \delta}{8}\Big)^r.\]

By averaging, there is $x_0 \in B_\lambda$ such that

\begin{equation}\exx_{\ssk{y_1, \dots, y_r \in B\\s, t, a_1, \dots, a_r \in B_\mu}} \prod_{i \in [r]} \bal(x_0 + a_i, y_i + s) \prod_{i \in [r]} \bal(x_0 + a_i, y_i + a_i + t) \geq \Big(\frac{\alpha^2 \delta}{8}\Big)^r.\label{holderstep1}\end{equation}

However, the long expression equals
\[\exx_{s,t \in B_\mu} \Big|\exx_{y \in B, a \in B_\mu} \bal(x_0 + a, y + s)\bal(x_0 + a, y + a + t)\Big|^r\]
so by averaging, there are $s_0, t_0 \in B_\mu$ such that 
\[\frac{\alpha^2 \delta}{8} \leq \Big|\exx_{y \in B, a \in B_\mu} \bal(x_0 + a, y + s_0)\bal(x_0 + a, y + a + t_0)\Big| \leq \exx_{a \in B_\mu} \id_X(x_0 + a)\]
so $|X \cap (x_0 + B_\mu)| \geq \frac{\alpha^2 \delta}{8} |B_\mu|$. Let us write $\delta_\mu = \frac{|X \cap (x_0 + B_\mu)|}{|B_\mu|}$, which is the relative density of $X$ inside $x_0 + B_\mu$. Recalling that $X$ has no significant density increments on translates of $B_\mu$, we have thus obtained
\begin{equation}\label{reldensitymu} \frac{\alpha^2 \delta}{8} \leq \delta_\mu \leq (1 + c_2)\delta.\end{equation} 

Applying Cauchy-Schwarz inequality in~\eqref{holderstep1}, we get
\begin{align}\Big(\frac{\alpha^2 \delta}{8}\Big)^{2r} \leq &\bigg|\exx_{\ssk{y_1, \dots, y_r \in B\\a_1, \dots, a_r \in B_\mu}} \Big(\exx_{s \in B_\mu} \prod_{i \in [r]} \bal(x_0 + a_i, y_i + s)\Big) \Big(\exx_{t \in B_\mu} \prod_{i \in [r]} \bal(x_0 + a_i, y_i + a_i + t)\Big)\bigg|^2\nonumber\\
\leq & \bigg(\exx_{\ssk{y_1, \dots, y_r \in B\\a_1, \dots, a_r \in B_\mu}} \Big|\exx_{s \in B_\mu} \prod_{i \in [r]} \bal(x_0 + a_i, y_i + s)\Big|^2\bigg)\nonumber\\
&\hspace{5cm} \bigg(\exx_{\ssk{y_1, \dots, y_r \in B\\a_1, \dots, a_r \in B_\mu}} \Big|\exx_{t \in B_\mu} \prod_{i \in [r]} \bal(x_0 + a_i, y_i + a_i + t)\Big|^2\bigg).\label{holderstep2}\end{align}

It follows from the change of variables lemma (Lemma~\ref{chVarLemma}) for transformation $y_i \mapsto y_i + a_i$, $i \in [r]$, that
\begin{equation}\bigg|\exx_{\ssk{y_1, \dots, y_r \in B\\a_1, \dots, a_r \in B_\mu}} \Big|\exx_{t \in B_\mu} \prod_{i \in [r]} \bal(x_0 + a_i, y_i + a_i + t)\Big|^2 - \exx_{\ssk{y_1, \dots, y_r \in B\\a_1, \dots, a_r \in B_\mu}} \Big|\exx_{t \in B_\mu} \prod_{i \in [r]} \bal(x_0 + a_i, y_i + t)\Big|^2\bigg| \leq 50 \mu r^2 d.\label{holderstep3}\end{equation}
Combining inequalities~\eqref{holderstep2} and~\eqref{holderstep3}, we obtain
\begin{equation}\Big(\frac{\alpha^2 \delta}{16}\Big)^{r} \leq \Big(\frac{\alpha^2 \delta}{8}\Big)^{r} - 50 \mu r^2 d \leq \exx_{\ssk{y_1, \dots, y_r \in B\\a_1, \dots, a_r, s, s' \in B_\mu}}\prod_{i \in [r]} \bal(x_0 + a_i, y_i + s) \bal(x_0 + a_i, y_i + s') = \|T_{(x_0, 0)}\bal\|_{\mathsf{VS}_r(B, B_\mu)}^{2r},\label{disbalanceineq}\end{equation}
where $T_{(x_0, 0)}\bal$ stands for the function $(x,y) \mapsto \bal(x_0 + x, y)$, (i.e. the translation operator applied to $\bal$), as $\mu \leq \frac{1}{100r^2d}\Big(\frac{\alpha^2 \delta}{8}\Big)^{r}$.\\

\noindent\textbf{Step 3. Passing from $\bal$ to $\id_A$.} In this step, we start from the fact that $\|T_{(x_0, 0)}\bal\|_{\mathsf{VS}_r(B, B_\mu)} \geq \Omega(\alpha \sqrt{\delta})$, and we aim to prove $\|T_{(x_0, 0)}\id_A\|_{\mathsf{VS}_{r'}(B, B_\mu)} \geq (1 + \Omega(1))\alpha \sqrt{\delta}$ where $r' = 64r$. Consider the vertical segments Gowers inner product 
\[\langle f_1, \dots, f_{r'}, g_1, \dots, g_{r'} \rangle_{\mathsf{VS}_{r'}(B, B_\mu)}\]
where each $f_i$ and $g_i$ are either $(x, y) \mapsto \bal(x_0 + x, y)$ or $(x,y) \mapsto \vd(x_0 + x)$. Using Gowers-H\"older's inequality (Lemma~\ref{gowholdIneq} \textbf{(i)}), we get
\begin{equation}\langle f_1, \dots, f_{r'}, g_1, \dots, g_{r'} \rangle_{\mathsf{VS}_{r'}(B, B_\mu)}^{r'} \leq \prod_{i \in [r']} \langle f_i, \dots, f_i, g_i, \dots, g_i \rangle_{\mathsf{VS}_r(B, B_\mu)}.\label{gowholdstep}\end{equation}
Suppose that for some $i$ we have $f_i$ and $g_i$ different, so without loss of generality $f_i(x,y) = \bal(x_0 + x, y)$ and $g_i(x,y) = \vd(x_0 + x)$. Then 
\[ \langle f_i, \dots, f_i, g_i, \dots, g_i \rangle_{\mathsf{VS}_{r'}(B, B_\mu)} = \exx_{s, s' \in B_\mu} \Big(\exx_{a \in B_\mu} \Big(\exx_{y \in B}\bal(x_0 + a, y + s)\Big) \vd(x_0 + a) \Big)^{r'}.\]
However, by regularity of $B$, for each $s \in B_\mu$ we have
\begin{align*}\Big|\exx_{y \in B}\bal(x_0 + a, y + s)\Big| = &\Big|\exx_{y \in B}\id_A(x_0 + a, y + s) - \vd(x_0 + a)\Big| \\
=& \Big|\exx_{y \in B}\id_A(x_0 + a, y + s) - \id_A(x_0 + a, y)\Big| \leq |B \Delta s + B|/|B| \leq 24 d \mu,\end{align*}
which, together with~\eqref{gowholdstep} and bounds $\|\bal\|_\infty, \|\vd\|_\infty \leq 1$, implies that
\[\langle f_1, \dots, f_{r'}, g_1, \dots, g_{r'} \rangle_{\mathsf{VS}_{r'}(B, B_\mu)} \leq 24d\mu.\]

We now turn to the case when $f_i = g_i$ holds for all $i \in [r']$. In that case, we always have
\[\langle f_1, \dots, f_{r'}, f_1, \dots, f_{r'} \rangle_{\mathsf{VS}_{r'}(B, B_\mu)} = \exx_{\ssk{y_1, \dots, y_{r'} \in B\\a_1, \dots, a_{r'} \in B_\mu}} \Big|\exx_{s \in B_\mu} \prod_{i \in [r']} f_i(x_0 + a_i, y_i + s)\Big|^2 \geq 0.\]

We need additional estimates, so let us write
\begin{align*}\Pi_k = \langle \underbrace{T_{(x_0, 0)}\bal, \dots, T_{(x_0, 0)}\bal}_k,& \underbrace{T_{(x_0, 0)}\vd, \dots, T_{(x_0, 0)}\vd}_{r' - k},\\
&\hspace{1cm}\underbrace{T_{(x_0, 0)}\bal, \dots, T_{(x_0, 0)}\bal}_{k}, \underbrace{T_{(x_0, 0)}\vd, \dots, T_{(x_0, 0)}\vd}_{r' - k} \rangle_{\mathsf{VS}_{r'}(B, B_\mu)},\end{align*}
which is the vertical segments Gowers inner product for $k$ pairs of $T_{(x_0, 0)}\bal$ terms and $r' -k$ pairs of $T_{(x_0, 0)}\vd$ terms. We showed above that $\Pi_k \geq 0$ for all $k$.\\

Assume that $k$ is even and that $k \geq r$. Note that
\begin{align*}\Pi_k = &\exx_{s, s' \in B_\mu} \Big(\exx_{y \in B, a \in B_\mu} \bal(x_0 + a, y + s)\bal(x_0 + a, y + s')\Big)^k \Big(\exx_{a \in B_\mu} \vd(x_0 + a)^2\Big)^{r' - k}\\
\geq &(1-c_1)^{2r' - 2k} \alpha^{2r' -2k} \exx_{s, s' \in B_\mu} \Big(\exx_{y \in B, a \in B_\mu} \bal(x_0 + a, y + s)\bal(x_0 + a, y + s')\Big)^k \Big(\exx_{a \in B_\mu} \id_X(x_0 + a)\Big)^{r' - k}\end{align*}
as $ \vd(x_0 + a) \geq (1-c_1)\alpha \id_X(x_0 + a)$ for all $a$. Recalling that $\delta_\mu = \ex_{a \in B_\mu} \id_X(x_0 + a)$, this further equals
 \[(1-c_1)^{2r' - 2k} \alpha^{2r' -2k} \delta_\mu^{r' - k} \exx_{s, s' \in B_\mu} \Big(\exx_{y \in B, a \in B_\mu} \bal(x_0 + a, y + s)\bal(x_0 + a, y + s')\Big)^k.\]
By the monotonicity of $L^p$ norms, this expression is at least
\[(1-c_1)^{2r' - 2k} \alpha^{2r' -2k} \delta_\mu^{r' - k} \bigg(\exx_{s, s' \in B_\mu} \Big(\exx_{y \in B, a \in B_\mu} \bal(x_0 + a, y + s)\bal(x_0 + a, y + s')\Big)^r\bigg)^{k/r}.\]
By inequality~\eqref{disbalanceineq}, we get a further lower bound
\[(1-c_1)^{2r' - 2k} \alpha^{2r' -2k} \delta_\mu^{r' - k} \Big(\frac{\alpha^2 \delta}{16}\Big)^{k}\]
which, using~\eqref{reldensitymu}, is at least
\[(1-c_1)^{2r' - 2k} \alpha^{2r' -2k} \delta_\mu^{r' - k} \frac{1}{(1 + c_2)^k}\Big(\frac{\alpha^2 \delta_\mu}{16}\Big)^{k}.\]
Since $(1-c_1)^2/(1 + c_2) \geq 1/2$, we finally get
\[\Pi_k \geq \Big(\frac{1}{32}\Big)^k (1 -c_1)^{2r'} \alpha^{2r'} \delta_\mu^{r'}.\]

Expanding out
\[\|T_{(x_0, 0)}\id_A\|^{2r'}_{\mathsf{VS}_{r'}(B, B_\mu)} =  \langle T_{(x_0, 0)}\bal_A + T_{(x_0, 0)}\vd, \dots, T_{(x_0, 0)}\bal_A + T_{(x_0, 0)}\vd\rangle_{\mathsf{VS}_{r'}(B, B_\mu)},\]
which results in $4^{r'}$ terms, and putting all the above information on vertical segments Gowers inner products together, we obtain 
\begin{align*} \|T_{(x_0, 0)}\id_A\|^{2r'}_{\mathsf{VS}_{r'}(B, B_\mu)} \geq & \sum_{k = 0}^{r'} \binom{r'}{k} \Pi_k \,\, -\,\, 4^{r'} \cdot 24d\mu\\
\geq &\sum_{\ssk{k\text{ even}\\r \leq k \leq r'}} \binom{r'}{k} \Pi_k \,\, -\,\, 4^{r'} \cdot 24d\mu\\
\geq &(1 -c_1)^{2r'} \alpha^{2r'} \delta_\mu^{r'}  \sum_{\ssk{k\text{ even}\\r \leq k \leq r'}} \binom{r'}{k} 32^{-k} \,\, -\,\, 4^{r'} \cdot 24d\mu\\
\geq &(1 -c_1)^{2r'} \alpha^{2r'} \delta_\mu^{r'}  \Big(1 + \frac{1}{64}\Big)^{r'} \,\, -\,\, 4^{r'} \cdot 24d\mu\\
\geq & \Big(1 + \frac{1}{256}\Big)^{r'} \alpha^{2r'} \delta_\mu^{r'},\end{align*}
where we used Lemma~\ref{binomialIneq} in the penultimate step and inequalities $c_1 \leq 2^{-9}$ and $\mu \leq \frac{1}{2^{20}d} \Big(\frac{\alpha^4 \delta}{32}\Big)^{r'}$ in the last one.\\

\noindent\textbf{Step 4. Sifting.} Expanding out $\|T_{(x_0, 0)} \id_A\|^{2r'}_{\mathsf{VS}_{r'}(B, B_\mu)} \geq \Big(1 + \frac{1}{256}\Big)^{r'} \alpha^{2r'} \delta_\mu^{r'}$ we obtain
\[\Big(1 + \frac{1}{256}\Big)^{r'} \alpha^{2r'} \delta_\mu^{r'} \leq \exx_{\ssk{y_1, \dots, y_{r'} \in B\\a_1, \dots, a_{r'}, s, s' \in B_\mu}} \prod_{i \in [r']} \id_A(x_0 + a_i, y_i + s) \id_A(x_0 + a_i, y_i + s').\]
Let us rename the variables $y_i$ to $t_i$, $s$ to $y$ and $s'$ to $y'$. The expression above becomes
\[\Big(1 + \frac{1}{256}\Big)^{r'} \alpha^{2r'} \delta_\mu^{r'} \leq \exx_{\ssk{t_1, \dots, t_{r'} \in B\\a_1, \dots, a_{r'}, y, y' \in B_\mu}} \prod_{i \in [r']} \id_A(x_0 + a_i, t_i + y) \id_A(x_0 + a_i, t_i + y').\]
We may average of $y' \in B_\mu$ to find some $y'_0$ for which
\[\Big(1 + \frac{1}{256}\Big)^{r'} \alpha^{2r'} \delta_\mu^{r'} \leq \exx_{\ssk{t_1, \dots, t_{r'} \in B\\a_1, \dots, a_{r'}, y  \in B_\mu}} \prod_{i \in [r']} \id_A(x_0 + a_i, t_i + y) \id_A(x_0 + a_i, y'_0 + t_i).\] 
Consider the set $D = \{y \in B_{\mu} \colon \ex_{a \in B_\mu, t \in B} \id_A(x_0 + a, t + y)\id_A(x_0 + a, y'_0 + t) \geq (1 + 1/512) \alpha^2\delta_\mu\}$. Let $c_3 = 2^{-12}$. Then
\begin{align*}&\sum_{y \in D} \Big(\exx_{a \in B_\mu, t \in B} \id_A(x_0 + a, t + y)\id_A(x_0 + a, y'_0 + t) \Big)^{r'} \\
&\hspace{4cm}- (1 - c_3) \sum_{y \in B_{\mu}}  \Big(\exx_{a \in B_\mu, t \in B} \id_A(x_0 + a, t + y)\id_A(x_0 + a, y'_0 + t) \Big)^{r'} \\
\hspace{2cm}&\geq c_3 \sum_{y \in B_{\mu}} \Big(\exx_{a \in B_\mu, t \in B} \id_A(x_0 + a, t + y)\id_A(x_0 + a, y'_0 + t) \Big)^{r'} \\
&\hspace{4cm}- \sum_{y \in B_{\mu}\setminus D} \Big(\exx_{a \in B_\mu, t \in B} \id_A(x_0 + a, t + y)\id_A(x_0 + a, y'_0 + t) \Big)^{r'} \\
\hspace{2cm}&\geq c_3\Big(1 + \frac{1}{256}\Big)^{r'} \alpha^{2r'} \delta_\mu^{r'} |B_{\mu}|- \Big(1 + \frac{1}{512}\Big)^{r'} \alpha^{2r'} \delta_\mu^{r'} |B_{\mu}| \\
\hspace{2cm}&\geq \alpha^{2r'} \delta_\mu^{r'}|B_{\mu}|,\end{align*}
since $r' \geq 2^{10}c_3^{-1}$.\\
\indent Upon expanding, we get
\begin{align*}& \exx_{\ssk{a_1, \dots, a_{r'} \in B_\mu\\t_1, \dots, t_{r'} \in B, y \in B_\mu}} \id_D(y) \prod_{i \in [r']} \id_A(x_0 + a_i, t_i + y)\id_A(x_0 + a_i, y'_0 + t_i)\\
&\hspace{2cm}\geq (1 - c_3) \exx_{\ssk{a_1, \dots, a_{r'} \in B_\mu\\t_1, \dots, t_{r'} \in B, y \in B_\mu}}\prod_{i \in [r']} \id_A(x_0 + a_i, t_i + y)\id_A(x_0 + a_i, y'_0 + t_i)  + \alpha^{2r'} \delta_\mu^{r'}.\end{align*}

Let $\nu$ be another positive parameter such that $B_\nu$ is regular and 
\[\frac{1}{2} \leq \frac{\nu}{\frac{1}{2^{40}(r' + 1) d} \Big(\frac{\alpha^4 \delta}{8}\Big)^{r'} \mu} \leq 1.\]
Let us introduce another dummy variable $z$ that ranges over $B_\nu$. Apply the change of variables lemma (Lemma~\ref{chVarLemma}) for transformation $y \mapsto y - z$, $t_i \mapsto t_i + z$, $i \in [r']$, to get
\begin{align*}& \exx_{\ssk{a_1, \dots, a_{r'} \in B_\mu\\t_1, \dots, t_{r'} \in B, y \in B_\mu, z \in B_\nu}} \id_D(y - z) \prod_{i \in [r']} \id_A(x_0 + a_i, t_i + y)\id_A(x_0 + a_i, y'_0 + t_i + z)\\
&\hspace{2cm}\geq (1 - c_3) \exx_{\ssk{a_1, \dots, a_{r'} \in B_\mu\\t_1, \dots, t_{r'} \in B, y \in B_\mu, z \in B_\nu}}\prod_{i \in [r']} \id_A(x_0 + a_i, t_i + y)\id_A(x_0 + a_i, y'_0 + t_i + z) \\
&\hspace{8cm} + \alpha^{2r'} \delta_\mu^{r'} - 50(r' + 1)d\mu^{-1}\nu.\end{align*}
Since $\nu \leq \frac{1}{100(r'+ 1)d} \Big(\frac{\alpha^4 \delta}{8}\Big)^{r'}\mu$, we have $\frac{1}{2}\alpha^{2r'} \delta_\mu^{r'} \geq 50(r'+ 1)d\mu^{-1}\nu$. By averaging, there exists a choice of $a_1, \dots, a_{r'}, t_1, \dots, t_{r'}$ such that
\begin{align*}& \exx_{y \in B_\mu, z \in B_\nu} \id_D(y - z) \prod_{i \in [r']} \id_A(x_0 + a_i, t_i + y)\id_A(x_0 + a_i, y'_0 + t_i + z)\\
&\hspace{2cm}\geq (1 - c_3) \exx_{y \in B_\mu, z \in B_\nu}\prod_{i \in [r']} \id_A(x_0 + a_i, t_i + y)\id_A(x_0 + a_i, y'_0 + t_i + z)  + \frac{1}{2}\alpha^{2r'} \delta_\mu^{r'}.\end{align*}

Define $Y = \{y \in B_\mu \colon (\forall i \in [r'])\, (x_0 + a_i, y + t_i) \in A\}$ and $Z =  \{z \in B_\nu \colon (\forall i \in [r']) \,(x_0 + a_i, y'_0 + t_i + z) \in A\}$. In terms of $Y$ and $Z$, we obtain the inequality
\begin{align*}&\exx_{y \in B_\mu, z \in B_\nu} \id_D(y - z) \id_Y(y) \id_Z(z)  \geq (1 - c_3)\exx_{y \in B_\mu, z \in B_\nu} \id_Y(y) \id_Z(z)  + \frac{1}{2}\alpha^{2r'} \delta_\mu^{r'}.\end{align*}
Let $\beta = |Y|/|B_\mu|$ and $\gamma = |Z| / |B_\nu|$. We get
\begin{equation}\exx_{y \in B_\mu, z \in B_\nu} \id_D(y - z) \id_Y(y) \id_Z(z) \geq (1-c_3) \beta\gamma\label{densityEQN}\end{equation}
and
\begin{equation}\beta, \gamma, \frac{|D|}{|B_\mu|} \geq \frac{1}{2}\alpha^{2r'} \delta_\mu^{r'}.\label{densityEQN2}\end{equation}

\noindent\textbf{Step 5. Using almost-periodicity.} In this step, we apply Theorem~\ref{apthm}. In the notation of that theorem, we shall set $B^{(1)} = B_\mu$ and $B^{(2)} = B_\nu$ so the condition $|D| \leq 2|B^{(1)}|$ holds immediately. However, due to other technical condition that requires $|Y| \leq |D|$, we modify $Y$ slightly. If $|Y| \,> |D|$, choose a subset $Y'\subseteq Y$ uniformly at random among subsets of size $|D|$. Write $\beta'$ for density of $Y'$ inside $B_\mu$, thus $\beta' = |D|/|B_\mu|$. Using linearity of expectation, we get
\begin{align*}&\exx_{Y'} \bigg( \sum_{y \in B_\mu, z \in B_\nu} \id_D(y - z) \id_{Y'}(y)\id_{Z}(z)  -  (1-c_3) \beta'\gamma|B_\mu||B_\nu|\bigg) \\
&\hspace{2cm} =  \sum_{y  \in B_\mu, z \in B_\nu} \id_D(y - z) \id_{Y}(y)\id_{Z}(z) \mathbb{P}(y \in Y') -  (1-c_3) \beta'\gamma|B_\mu||B_\nu|\\
&\hspace{2cm} = \frac{|Y'|}{|Y|}\Big( \sum_{y  \in B_\mu, z \in B_\nu}\id_D(y - z) \id_{Y}(y)\id_{Z}(z) -  (1-c_3) \beta\gamma|B_\mu||B_\nu|\Big) \geq 0.\end{align*}
Hence, there exists a choice of a subset $Y'$ of size $|D|$ such that $\ex_{y\in B_\mu, z \in B_\nu} \id_D(y - z) \id_{Y'}(y)\id_{Z}(z)  \geq (1-c_3) \beta'\gamma$. We misuse the notation and write $Y$ for this subset and $\beta$ for its density relative to $B_\mu$. Note that the lower bound on $\beta$ does not change, as we had the same lower bound~\eqref{densityEQN2} on $\frac{|D|}{|B_\mu|}$.\\
\indent Theorem~\ref{apthm} (with $\varepsilon = c_3$) provides us with a further regular Bohr set $B' \subseteq B_\nu$ of rank at most $d + d'$ and radius at least $\nu\rho c_3 \beta/(24d^3d')$, where 
\[d' \leq O\Big(\log^3(2\beta^{-1})\log(2\gamma^{-1})\Big)\]
and such that
\[\exx_{y \in B_\mu,z \in B_\nu, w \in B'} \id_D(y - z + w) \id_Y(y) \id_Z(z) \geq (1 - 2c_3) \beta\gamma.\]

Returning to the definition of the set $D$, we get
\[\exx_{\ssk{a \in B_\mu, b \in B\\y \in B_\mu,z \in B_\nu, w \in B'}} \id_A(x_0 + a, y - z + w + b)\id_A(x_0 + a, y'_0 + b) \id_Y(y) \id_Z(z) \geq (1 + 2^{-10}) \alpha^2\beta\gamma \delta_\mu.\]

\noindent\textbf{Step 6. Completing the proof.} By averaging, there are $y_0 \in Y, z_0 \in Z$ such that
\[\exx_{\ssk{a \in B_\mu, b \in B\\w \in B'}} \id_A(x_0 + a, y_0 - z_0 + w + b) \id_A(x_0 + a, y'_0 + b) \geq (1 + 2^{-10}) \alpha^2 \delta_\mu.\]

Let $G$ be the set of $b \in B$ such that $\exx_{a \in B_\mu}\id_A(x_0 + a, y'_0 +  b) \geq 2^{-11}\alpha^2 \delta_\mu$. Then
\[\frac{1}{|B|} \sum_{b \in G} \exx_{a \in B_\mu, w \in B'} \id_A(x_0 + a, y_0 - z_0 + w + b) \id_A(x_0 + a, y'_0 + b) \geq (1 + 2^{-11}) \alpha^2 \delta_\mu.\]

Note also that
\begin{align*}\frac{1}{|B|}\sum_{b \in G} \exx_{a \in B_\mu, w \in B'} \id_A(x_0 + a, y'_0 + b) \leq &\exx_{a \in B_\mu, b \in B} \id_A(x_0 + a, y'_0 + b)\\
= &\exx_{a \in B_\mu} \id_X(x_0 + a) \Big(\exx_{b \in B}\id_A(x_0 + a, y'_0 + b)\Big) \leq (1 + c_1) \alpha\delta_\mu.\end{align*}
Hence, as $c_1 \leq 2^{-13}$, 
\[\frac{1}{|B|} \sum_{b \in G} \exx_{a \in B_\mu, w \in B'} \id_A(x_0 + a, y_0 - z_0 + w + b) \id_A(x_0 + a, y'_0 + b) \geq (1 + 2^{-12}) \alpha \frac{1}{|B|}\sum_{b \in G} \exx_{a \in B_\mu, w \in B'} \id_A(x_0 + a, y'_0 + b).\]

By averaging, there exists $b_0 \in G$ such that
\[\exx_{a \in B_\mu, w \in B'} \id_A(x_0 + a, y_0 - z_0 + b_0 + w) \id_A(x_0 + a, y'_0 + b_0) \geq (1 + 2^{-12}) \alpha \exx_{a \in B_\mu} \id_A(x_0 + a, y'_0 + b_0).\]

Define the set $\tilde{X} = \{a \in x_0 + B_\mu \colon (a,y'_0 + b_0) \in A\}$. Owing to definition of $G$, its relative density $\delta'$ inside $x_0 + B_\mu$ satisfies $\delta' \geq 2^{-11}\alpha^2 \delta_\mu \geq 2^{-14} \alpha^4 \delta$. Using the new notation, the inequality above becomes
\[\exx_{a \in B_\mu, w \in B'}\id_A(x_0 + a, y_0 - z_0 + b_0 + w)  \id_{\tilde{X}}(x_0 + a) \geq (1 + 2^{-12}) \alpha \delta'.\]

Write $w_0 = y_0 - z_0 + b_0$. Let $\tilde{X}'$ be the set of all $x \in \tilde{X}$ such that $\ex_{w \in B'} \id_A(x, w_0 + w) \geq (1 + 2^{-13}) \alpha$. Then 
\[\exx_{a \in B_\mu, w \in B'}\id_A(x_0 + a, w_0 + w)  \id_{\tilde{X}'}(x_0 + a) \geq 2^{-13} \alpha \delta',\]
so in particular $|\tilde{X}'| \geq 2^{-13} \alpha \delta' |B_\mu|$.\\

It remains to ensure that translates of the same Bohr set in vertical and horizontal directions. Recall that $B' \subseteq B_\nu$. Using $\nu \leq \frac{2^{-28}\alpha^5\delta}{200d}\mu$, we may apply Corollary~\ref{bohraverage} to find a translate $t_0 + B' \subseteq x_0 + B_\mu$ such that $|\tilde{X}' \cap (t_0 + B')| \geq 2^{-14} \alpha \delta' |t_0 + B'|$. Write $\tilde{X}'' = \tilde{X}' \cap (t_0 + B')$ and let $\tilde{\delta}$ be $|\tilde{X}''| / |t_0 + B'|$. Finally, we conclude that the relative density of $A \cap (\tilde{X}'' \times (w_0 + B'))$ inside $\tilde{X}'' \times (w_0 + B')$ is precisely
\[\exx_{a \in \tilde{X}'', w \in w_0 + B'} \id_A(a, w) = \frac{|B'|}{|\tilde{X}''|} \exx_{a \in B', w \in B'} \id_A(t_0 + a, w_0 + w) \id_{\tilde{X}''}(t_0 + a) \geq (1 + 2^{-13}) \alpha.\]
Hence, we have obtained the desired density increment inside $\tilde{X}'' \times (w_0 + B') \subseteq (t_0 + B') \times (w_0 + B')$, as required.\end{proof}

\thebibliography{99}

\bibitem{AjtaiSzem} M. Ajtai and E. Szemer\'edi, \textit{Sets of lattice points that form no squares}, Studia Sci. Math. Hungar. \textbf{9} (1974), 9--11.

\bibitem{Austin1} T. Austin, \emph{Partial difference equations over compact Abelian groups, I: modules of solutions}, arXiv preprint (2013), \verb+arXiv:1305.7269+.

\bibitem{Austin2} T. Austin, \emph{Partial difference equations over compact Abelian groups, II: step-polynomial solutions}, arXiv preprint (2013), \verb+arXiv:1309.3577+.

\bibitem{Beker} A. Beker, \textit{Improved bounds for skew corner-free sets}, arXiv preprint (2024), \verb+arXiv:2402.19169+.

\bibitem{BloomSisask} T.F. Bloom and O. Sisask, \textit{The Kelley--Meka bounds for sets free of three-term arithmetic progressions}, arXiv preprint (2023), \verb+arXiv:2302.07211+.

\bibitem{Bourgain} J. Bourgain, \textit{On triples in arithmetic progression}, Geom. Funct. Anal. \textbf{9} (1999), 968--984.

\bibitem{CKSUmatrix} H. Cohn, R. Kleinberg, B. Szegedy, and C. Umans,
\textit{Group-theoretic algorithms for matrix multiplication}, Proceedings of the 46\tss{th} Annual Symposium on Foundations of Computer Science (FOCS 2005), IEEE Computer Society, 2005, pp. 379--388.

\bibitem{CUmatrix} H. Cohn and C. Umans, \textit{A group-theoretic approach to fast matrix multiplication}, Proceedings of the 44\tss{th} Annual Symposium on Foundations of Computer Science (FOCS 2003), IEEE Computer Society, 2003,
pp. 438--449. 

\bibitem{CSap} E. Croot and O. Sisask, \textit{A probabilistic technique for finding almost-periods of convolutions}, Geom. Funct. Anal. \textbf{20} (2010), no. 6, 1367--1396.

\bibitem{FHHK} Y. Filmus, H. Hatami, K. Hosseini and E. Kelman, \emph{A generalization of the Kelley-Meka theorem to binary systems of linear forms}, arXiv preprint (2023), \verb+arXiv:2311.12248+.

\bibitem{Gow1} W.T. Gowers, \emph{A new proof of Szemer\'edi's theorem}, Geom. Funct. Anal. \textbf{8} (1998), 529--551.

\bibitem{Gow2} W.T. Gowers, \emph{A new proof of Szemer\'edi's theorem}, Geom. Funct. Anal. \textbf{11} (2001), 465--588.

\bibitem{JLOskew} M. Jaber, S. Lovett and A. Ostuni, \emph{Strong bounds for skew corner-free sets}, arXiv preprint (2024), \verb+arXiv:2404.07380+.

\bibitem{KelleyMeka} Z. Kelley and R. Meka, \emph{Strong Bounds for 3-Progressions}, in 2023 IEEE 64\tss{th} Annual Symposium on Foundations of Computer Science (FOCS), IEEE, pp. 933--973.

\bibitem{KLMgrid} Z. Kelley, S. Lovett and R. Meka, \emph{Explicit separations between randomized and deterministic Number-on-Forehead communication}, arXiv preprint (2023), \verb+arXiv:2308.12451+. 

\bibitem{MilDir} L. Mili\'cevi\'c, \textit{An inverse theorem for certain directional Gowers uniformity norms}, Publ. Inst. Math. (Beograd) (N.S.) \textbf{113} (2023), 1--56.

\bibitem{PeluseSurvey} S. Peluse, \textit{Finite field models in arithmetic combinatorics -- twenty years on}, arXiv preprint (2023), \verb+arXiv:2312.08100+.

\bibitem{Petrov} F. Petrov, \textit{A variant of the corners problem}, (2023),\\
\verb+https://mathoverflow.net/questions/451580/a-variant-of-thecorners-problem/+

\bibitem{PohoataZakharov} C. Pohoata and D. Zakharov, \textit{On skew corner-free sets}, arXiv preprint (2024), \verb+arXiv:2401.17507+.

\bibitem{Pratt} K. Pratt, \textit{On generalized corners and matrix multiplication}, arXiv preprint (2023), \verb+arXiv:2309.03878+.

 \bibitem{Sanders} T. Sanders, \textit{On the Bogolyubov-Ruzsa lemma}, Anal. PDE \textbf{5} (2012), no. 3, 627--655.

\bibitem{SchSisask} T. Schoen and O. Sisask, \emph{Roth's theorem for four variables and additive structures in sums of sparse sets}, Forum Math. Sigma \textbf{4} (2016), e5, 28pp.

\bibitem{Shkredov} I.D. Shkredov, \textit{On a generalization of Szemer\'edi's theorem}, Proc. Lond. Math. Soc. (3) \textbf{93} (2006), 723--760.

\bibitem{ShkredovKM} I.D. Shkredov, \textit{Some new results on the higher energies I}, arXiv preprint (2023), \verb+arXiv:2303.16348+.

\bibitem{TaoVuBook} T. Tao and V. Vu, \textbf{Additive combinatorics}, Cambridge Studies in Advanced Mathematics \textbf{105}, Cambridge University Press, Cambridge, UK, 2006.

\end{document}